\documentclass[sn-mathphys,Numbered]{sn-jnl}%

\usepackage{graphicx}%
\usepackage{multirow}%
\usepackage{amsmath,amssymb,amsfonts}%
\usepackage{amsthm}%
\usepackage{mathrsfs}%
\usepackage[title]{appendix}%
\usepackage{xcolor}%
\usepackage{textcomp}%
\usepackage{manyfoot}%
\usepackage{booktabs}%
\usepackage{algorithm}%
\usepackage{algorithmic}%

\usepackage{listings}%
\newtheorem{theorem}{Theorem}

\usepackage{hyperref}       

\newtheorem{lemma}{Lemma}%

\newtheorem{definition}{Definition}%

\newcommand{\sm}[2]{\begin{smallmatrix}\item1\\\item2 \end{smallmatrix}}



\newcommand\ddfrac[2]{\frac{\displaystyle \item1}{\displaystyle \item2}}

\newcommand{\mc}[1]{\mathbb{\item1}}

\raggedbottom



\usepackage{mathtools}

\usepackage{tikz}
\usepackage{ifthen}
\usetikzlibrary{arrows,automata,positioning,shapes,calc, decorations.pathmorphing}

\newcommand{\floor}[1]{\left\lfloor{#1}\right\rfloor}
\newcommand{\ceil}[1]{\left\lceil{#1}\right\rceil}

\newcommand{\eps}{\varepsilon}

\newcommand{\one}{\mathbf{1}}

\DeclareMathOperator{\spn}{span}

\DeclareMathOperator{\image}{Im}

\DeclareMathOperator{\diag}{diag}

\newcommand{\N}{\mathbb{N}}
\newcommand{\R}{\mathbb{R}}

\newcommand{\bA}{{\bf A}}
\newcommand{\bB}{{\bf B}}

\newcommand{\bI}{{\bf I}}

\newcommand{\bP}{{\bf P}}

\newcommand{\bW}{{\bf W}}

\newcommand{\cL}{{\mathcal{L}}}
\newcommand{\cLp}{\cL^\perp}
\newcommand{\cM}{{\mathcal{M}}}

\newcommand{\bb}{{\bf b}}

\newcommand{\bg}{{\bf g}}

\newcommand{\bm}{{\bf m}}

\newcommand{\bp}{{\bf p}}
\newcommand{\bq}{{\bf q}}

\newcommand{\bs}{{\bf s}}

\newcommand{\bx}{{\bf x}}

\newcommand{\bz}{{\bf z}}

\newcommand{\smax}{\sigma_{\max}}
\newcommand{\smin}{\sigma_{\min}}
\newcommand{\sminp}{\sigma_{\min}^+}

\newcommand{\lmax}{\lambda_{\max}}
\newcommand{\lmin}{\lambda_{\min}}
\newcommand{\lminp}{\lambda_{\min}^+}

\newcommand{\defeq}{\vcentcolon=}
\newcommand{\eqdef}{=\vcentcolon}

\newcommand{\mA}{\mathbf{A}}
\newcommand{\mB}{\mathbf{B}}

\newcommand{\mW}{\mathbf{W}}

\newcommand{\mP}{\mathbf{P}}

\newcommand{\g}{\nabla}

\newcommand{\mAT}{{\mA^\top}}
\newcommand{\mBT}{{\mB^\top}}

\newcommand{\chiA}{{\chi_A}}

\newcommand{\norm}[1]{\left\| #1 \right\|}

\newcommand{\sqn}[1]{\norm{#1}^2}
\newcommand{\angles}[1]{\left\langle #1 \right\rangle}
\newcommand{\cbraces}[1]{\left( #1 \right)}
\newcommand{\sbraces}[1]{\left[ #1 \right]}

\def\<#1,#2>{\langle #1,#2\rangle}

\newcommand{\muH}{\mu_{H}}
\newcommand{\LH}{L_{H}}


\usepackage{color}

\newcommand{\squeeze}{}  
\raggedbottom

\begin{document}
\title{Decentralized optimization with affine constraints over time-varying networks}
%
%


\author[1,3]{\fnm{Demyan} \sur{Yarmoshik}}\email{yarmoshik.dv@phystech.edu}
\author[1]{\fnm{Alexander} \sur{Rogozin}}\email{aleksandr.rogozin@phystech.edu}
\author[1,2,3]{\fnm{Alexander} \sur{Gasnikov}}\email{gasnikov@yandex.ru}

%
\affil[1]{\orgname{Moscow Institute of Physics and Technology}, \orgaddress{\city{Dolgoprudny}, \country{Russia}}}
\affil[2]{\orgname{Skoltech}, \orgaddress{\city{Moscow}, \country{Russia}}}
\affil[3]{\orgname{Institute for Information Transmission Problems}, \orgaddress{\city{Moscow}, \country{Russia}}}

%
\maketitle              

\begin{abstract}

The decentralized optimization paradigm assumes that each term of a finite-sum objective is privately stored by the corresponding agent. 
Agents are only allowed to communicate with their neighbors in the communication graph. 
We consider the case when the agents additionally have local affine constraints and the communication graph can change over time.
We provide the first linearly convergent decentralized algorithm for time-varying networks by generalizing the optimal decentralized algorithm ADOM to the case of affine constraints.
We show that its rate of convergence is optimal for first-order methods by providing the lower bounds for the number of communications and oracle calls.

\keywords{Convex optimization \and Decentralized optimization  \and Affine constraints \and Time-varying networks.}
\end{abstract}

\section{Introduction}
Decentralized optimization is a popular approach for solving modern machine learning and control problems.
For example, training a large language model is a high-dimensional optimization problem with complex objective function which is best solved on a cluster of computational units in a decentralized manner \cite{huang2019gpipe, lian2017can}.
In control of distributed systems, such as drone swarms \cite{hu2021vgai, zhu2023swarm} and wireless sensor networks \cite{li2023optimal}, decentralization is desired because only agent-to-agent communication is possible.
Data privacy and robustness properties of decentralized algorithms make them popular in power systems control \cite{molzahn2017survey, silva2023privacy, wang2022distributed}.
It is common for these problems to have interconnections between variables, usually posed as affine constraints, e.g., direct-current~(DC) power flow constraints in control problems related to electrical energy systems. 
It is also often the case that the communication graph between computation nodes is subject to change during the optimization process.

Over the past decade, constrained distributed optimization has attracted the attention of researchers.
Among the first applications of first-order methods to constrained decentralized optimization was the \textit{projected subgradient algorithm} in \cite{nedic2010constrained}, where the time-varying case was also analyzed. 
A systematic review of main problem classes falling into the definition of distributed constrained optimization along with algorithms working on various levels of decentralization was given in \cite{necoara2011parallel}.
More recent works use first-order methods to deal with a broad range of problem variants, including the
nonconvex objectives \cite{scutari2016parallel,scutari2019distributed},
the composite objectives \cite{wang2022distributed,wu2022distributed},
the inequality constraints \cite{zhu2011distributed,wang2022distributed,wu2022distributed,scutari2016parallel,gong4109852push,liang2019distributed} 
and other assumptions on problem's structure \cite{wang2022distributed,alghunaim2018dual}.
The ADMM-based approaches are also popular \cite{carli2019distributed,aybat2019distributed,chang2016proximal}.

However, to the best of our knowledge, no decentralized linear convergent first-order algorithms for affine-constrained problems have been proposed. 
In this work, we close this gap by providing a linearly convergent dual algorithm for decentralized affine-constrained optimization of the sum of smooth strongly convex functions over time-varying networks.
We build on the recently developed optimal algorithms for decentralized optimization over time-varying networks \cite{kovalev2021adom,kovalev2021lower}, 
and extend these results to the affine-constrained case.
This paper could also be seen as a generalization of \cite{rogozin2022decentralized} to the time-varying networks.

We also show that our new algorithm inherits the optimality of ADOM by constructing lower bounds on the number of communications and oracle calls in time-varying case.
During this analysis we also prove lower bounds for the static communication graph setup, thus showing the optimality of algorithms in \cite{rogozin2022decentralized}.
 
\subsection{Basic definitions and assumptions}
\begin{itemize}
\item Differentiable function $f$ is \textit{$\mu$-strongly convex} if 
\begin{align}
f(y) \geq f(x) + \<\nabla f(x), y -x> + \frac\mu2 \sqn{y-x}~\forall x,y \in \R.
\end{align}
\item Differentiable function $f$ is \textit{$L$-smooth} (or has $L$-Lipschitz continuous gradient) if  
\begin{align}
f(y) \leq f(x) + \<\nabla f(x), y -x> + \frac L2 \sqn{y-x}~\forall x,y \in \R.
\end{align}
\item $\lmin(A)$, $\lminp(A),~\lmax(A)$ are the minimal, the minimal positive and the maximal eigenvalues of a matrix $A$ respectively.
\item $\smin(A)$, $\sminp(A),~\smax(A)$ are the minimal, the minimal positive and the maximal singular values $\sigma_i(A) = \sqrt{\lambda_i(A^\top A)}$ of a matrix $A$ respectively.
\end{itemize}

\textbf{Notation}
\begin{itemize}
\item ``Dimension-lifted'' vectors and matrices are written in bold: $\bx, \bA$.
\item $x_{[i]}$ denotes the $i$-th component of a vector $x$.
\item The identity matrix of order $d$ is denoted by $I_d$. Sometimes the subscript is omitted if the order of $I$ is clear from the context.
\item $\one_d$ denotes the column vector of ones in $\R^d$.
\item $\image A$ and $\ker A$ denote the image and the null space of a linear operator $A$ respectively.
\end{itemize}

\section{Problem formulation}
\subsection{Objective and constraints}
We consider the following affine constrained optimization problem, where $f_i$ are assumed to be $L_F$-smooth (have Lipschitz-continuous gradient with constant $L_F$)  and $\mu_F$-strongly convex:
\begin{align}\label{eq:lifted}
	\min_{x_1,\ldots,x_n\in\R^d}~ &\sum_{i=1}^n f_i(x_i) \\
	\text{s.t. } &A_i x_i = b_i,~ i = 1, \ldots, n \\
	&x_1 = \ldots = x_n.\label{eq:consensus}
\end{align}
Practical examples of this type of finite-sum affine constrained optimization problems include constrained estimation problems, such as constrained least squares problems \cite{zhou2013path}.

The matrix-vector form of the problem is
\begin{align}\label{eq:common_constraints}
	\min_{\bx\in\R^{nd}}~ &F(\bx) \\
	\text{s.t. } &\bA \bx = \bb \nonumber \\
	&\bx \in \cL \nonumber,
\end{align}
where 
$\bx$ is the column vector of $x_1, \ldots, x_n$, $x_i \in \R^d$,
$F(\bx) = \sum_{i=1}^n f_i(x_i)$, and
$\cL$ denotes the \textit{consensus hyperlane} defined by constraint \eqref{eq:consensus}.
Note, that $F(\bx)$ is also $\mu_F$-strongly convex and $L_F$-smooth.

In case when $A_i$ and $b_i$ are all different, we set 
$\bA = \diag\cbraces{A_1, \ldots, A_n}$, $\bb = \one \otimes b$, where $\otimes$ denotes the Kronecker product.
If all $A_i = A$ and $b_i = b$ are the same, 
then there are several variants to choose $\bA, \bb$, e.g.:
\begin{itemize}
	\item $\bA = \bI\otimes A = \diag\cbraces{A, \ldots, A}$,
  $\bb = \one\otimes b$, or
	\item $\bA = (A~ 0~ \ldots~ 0),~ \bb = b$.
\end{itemize}
This logic also applies if there are clusters of agents with same affine constraints in each group.
For definiteness, we will assume that the first variant is chosen. 

\subsection{Decentralized communication}
We assume that the problem is distributed over a computational network consisting of $n$ agents (or nodes).
Each agent privately holds $f_i$, $A_i$ and $b_i$.
Agents are connected through a communication network, represented by a time-varying undirected graph, i.e. a sequence of undirected graphs. 
Agents are only allowed to exchange information with their current neighbors in the communication graph
The limitations imposed on the communication process are formally described in Defintion~\ref{def:foda}.

In further developments, we will heavily rely on the notion of
\textit{gossip matrix}.
$W$ is a gossip matrix of an undirected graph $G = (V, E)$, $|V| = n$ if it satisfies following properties 
\begin{enumerate}
  \item ${W}$ is symmetric and positive semi-definite;
	\item (Network compatibility) $[{ W}]_{ij} \neq 0$ if and only if $(i, j) \in E$ or $i = j$;
	\item (Kernel property) For any $v = [v_1,\ldots,v_n]^\top\in\R^n$, ${ W} v = 0$ if and only if $v_1 = \ldots = v_n$.
\end{enumerate}
A typical example of a gossip matrix is the \textit{Laplacian matrix} of a graph: $W\in \mathbb{R}^{n\times n}$,
\begin{align}\label{eq:laplacian}
[W]_{ij} = \begin{cases}
-1,  & \text{if } (i,j) \in E,\\
\text{deg}(i), &\text{if } i = j, \\
0,  & \text{otherwise.}
\end{cases}
\end{align}

Later we will use the dimension-lifted gossip matrix $W \otimes I_d$.
From the third property of gossip matrices, we have that $\cbraces{W \otimes I_d} \bx = 0$ if and only if $\bx\in \cL$.

Since we assume that the communication network is time-varying, we denote the communication graph at the $k$-th step by $G(k) = (V, E(k))$ and the associated gossip matrix by $W(k)$.
Note that the existence of a gossip matrix for each step implies, by the kernel property, that $G(k)$ is connected for all $k$. 
According to the second property of gossip matrices, multiplication $\cbraces{W(k) \otimes I_d} \bx$ can be performed in a decentralized way at step $k$.

The convergence rate of decentralized optimization algorithms depends on the spectrum of the gossip matrices.
Therefore we assume that
\begin{equation}\label{eq:gossip-spectrum}
\tilde \lambda_{\min}^+\leq \lambda_{\min}^+(W(k))\leq \lambda_{\max}(W(k))\leq \tilde \lambda_{\max}.
\end{equation}

\section{Lower bounds}
We consider the class of \textit{first-order decentralized algorithms} defined as follows.
\begin{definition}\label{def:foda}
Denote $\cM_i(k)$, where $\cM_i(0) = \{x_i^0\}$, as the local memory of the $i$-th agent at step $k$. 
The set of allowed actions of a first order decentralized algorithm at step $k$ is restricted to the three options
\begin{enumerate}
\item Local computation: $\cM_i(k) = \spn\cbraces{\{x, \nabla f_i(x), \nabla f^*_i(x): x \in \cM_i(k) \}}$;
\item Decentralized communication: $\cM_i(k) = \spn\cbraces{\{\cM_j(k): \text{edge}~ (i, j) \in E(k)\}}$.
\item Matrix multiplication: $\cM_i(k) = \spn\cbraces{\{b_i, A_i^\top A_i x: x \in \cM_i(k)\}}$.

\end{enumerate}
After each step $k$, an algorithm must provide a current approximate solution $x^k_i \in \cM_i(k)$ and set $\cM_i(k+1) = \cM_i(k)$.
\end{definition}

Using the standard approach for constructing lower bounds in smooth strongly convex optimization \cite{nesterov2004introduction}, we consider $d=\infty$.
Let $m$ be a positive integer parameter, $m \leq d$. Then, following \cite{salim2022optimal}, we consider the affine-constrained problem with constraint $(W' \otimes I_{d/m})x = 0$, where $W'\in \R^{m\times m}$ is a gossip matrix of some (static) communication graph, thus interpreting an affine-constrained problem as a decentralized optimization problem.

Let $W'$ be a gossip matrix associated with the graph $G'(V', E')$, $V' = \{1, \ldots, m\}$. 
Set $A_i = A = \sqrt{W' \otimes I_{d/m}}$ and $b_i = 0$. 
This leads to a two-level decentralized optimization problem.
On the upper level we have the conventional decentralized optimization problem over the communication network $G$:
\begin{align*}
	&\min_{x\in\R^d} \sum_{i=1}^n f_i(x),
\end{align*}
but each $f_i = \sum_{j=1}^m f_{ij}$ is distributed among the subnodes of the inner computational network $G'$ located inside the node $i$. 
This forms the inner level of our problem, as shown in Fig. \ref{fig:2graph}.
Thus, instead of thinking about affine constraints, we can think about the inner decentralized computational network.
From this perspective, subnodes exchange the $d/m$-dimension vectors, and nodes exchange the $d$-dimension vectors, which are stacked from the vectors of their subnodes.

We use this construction to obtain lower bounds for both static and time-varying setups.
\begin{theorem}\label{thm:lower_static}
For every $L_F \geq \mu_F >0$, $\chi_W >0$ and $\chi_A > 0$ there exists a decentralized optimization problem \eqref{eq:lifted} (\eqref{eq:common_constraints}) over a static communication graph $G$
with a gossip matrix $W: \frac{\lmax(W)}{\lminp(W)} = \chi_W$, and a matrix $\mA: \frac{\lmax(\mA^\top \mA)}{\lminp(\mA^\top \mA)} = \chi_A$, such that any decentralized first-order algorithm, as per Definition~\ref{def:foda}, requires at least
\begin{align*}
    &N = \Theta\cbraces{\sqrt{\frac{L_F}{\mu_F}} \log\frac{1}{\varepsilon}} \text{sequential local computations}, \\
    &\Theta\cbraces{N \sqrt{\chi_W}} \text{communications}, \\
    &\Theta\cbraces{N\sqrt{\chiA}} \text{multiplications by $\mA^\top \mA$}.
\end{align*}
to achieve 
$\sqn{x_i^N - x^*} \leq \eps~\forall i \in [1, n] \cap \N$. 
\end{theorem}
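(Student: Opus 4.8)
The plan is to exhibit, for the given $L_F\ge\mu_F>0$, $\chi_W>0$, $\chi_A>0$, a single hard instance that realizes all three bounds at once, by \emph{nesting} the classical worst-case construction for smooth strongly convex minimization: one ``chain'' function is split across the communication graph $G$ to force communications, and (exploiting that $A_i=\sqrt{W'\otimes I_{d/m}}$, $b_i=0$, hence $A_i^\top A_i=W'\otimes I_{d/m}$, so a multiplication by $A_i^\top A_i$ is literally a gossip step over the inner graph $G'$) a second split is used to force multiplications by $\bA^\top\bA$. I would first fix the graphs: take $G$ to be a (possibly edge-weighted) path on $n$ vertices with Laplacian-type gossip matrix $W$, for which $\lmax(W)/\lminp(W)$ can be tuned to equal $\chi_W$ while $\mathrm{diam}(G)=\Theta(\sqrt{\chi_W})$; similarly let $G'$ be a path on $m$ subnodes with $W'$ realizing $\chi_A$ and $\mathrm{diam}(G')=\Theta(\sqrt{\chi_A})$, so that $\bA^\top\bA=\bI\otimes W'\otimes I_{d/m}$ has condition number $\chi_A$. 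Set $d=\infty$, $b_i\equiv0$, $A_i\equiv\sqrt{W'\otimes I_{d/m}}$.

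Next I would build the objective. On the outer level, place a split Nesterov chain so that only the two endpoint nodes of $G$ carry its non-quadratic part, the remaining $f_i$ being pure quadratic regularizers; refine this so that inside each endpoint node the non-quadratic part lives only on the block coordinates of the two endpoint subnodes $1$ and $m$ of $G'$. Assign the successive chain coordinates to these four (endpoint-node, endpoint-subnode) slots in a cyclic pattern, so that the ``zero-chain'' property holds (a gradient or conjugate-gradient evaluation $\nabla f_i$ or $\nabla f_i^*$ enlarges the activated coordinate set by at most one) and so that advancing the frontier by one coordinate forces either a traversal of $G$ (when the responsible node switches) or a traversal of $G'$ inside a node (when the responsible subnode switches). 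A uniform rescaling makes $F=\sum_i f_i$ exactly $\mu_F$-strongly convex and $L_F$-smooth, with minimizer $x^*$ whose coordinates decay geometrically with ratio $\rho=(\sqrt{L_F/\mu_F}-1)/(\sqrt{L_F/\mu_F}+1)$.

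Then comes the counting argument. For agent $i$ and step $k$, let $\cS_i(k)\subseteq\N$ index the chain coordinates on which the vectors in $\cM_i(k)$ may be nonzero. By Definition~\ref{def:foda}: a local computation enlarges $\cS_i$ by at most one index, and only at a node holding the corresponding chain term; a communication replaces $\cS_i$ by $\bigcup_{(i,j)\in E}\cS_j$, advancing information along $G$ by one edge; a multiplication by $A_i^\top A_i=W'\otimes I_{d/m}$ advances information inside node $i$ along $G'$ by one edge. Hence each unit increase of $\max_i|\cS_i|$ costs $\ge1$ sequential local computation, and over the whole chain a constant fraction of the increments each demand $\ge\mathrm{diam}(G)=\Omega(\sqrt{\chi_W})$ communications and a constant fraction each demand $\ge\mathrm{diam}(G')=\Omega(\sqrt{\chi_A})$ multiplications, none of which can be overlapped since the chain totally orders activations. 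Finally $\sqn{x_i^N-x^*}\le\eps$ forces $|\cS_i(N)|\ge N_0:=\Omega(\sqrt{L_F/\mu_F}\log\tfrac1\eps)$ — with support of size $k$ the residual is at least $\rho^{2(k+1)}/(1-\rho^2)$ — yielding $N=\Omega(N_0)$ local computations, $\Omega(N_0\sqrt{\chi_W})$ communications and $\Omega(N_0\sqrt{\chi_A})$ multiplications; together with the matching upper bounds of the accelerated decentralized algorithm this gives the stated $\Theta$ estimates.

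I expect the main obstacle to be exactly the nested/recursive step: one must design the coordinate-to-(node, subnode) assignment and the $f_i$ so that a single local computation at an endpoint node cannot short-circuit the inner $G'$-traversal — that is, $\nabla f_i$ and $\nabla f_i^*$ must only ever expose information still localized at subnodes $1$ or $m$ of $G'$, so that the $\Omega(\sqrt{\chi_A})$ multiplications are genuinely needed before the outer frontier can advance. Establishing this invariant, while simultaneously checking that the nested splitting preserves the exact moduli $\mu_F,L_F$ and that weighted paths realize the prescribed $\chi_W,\chi_A$, is where essentially all the work lies; the rest is the bookkeeping sketched above.
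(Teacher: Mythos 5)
Your overall route is the paper's: interpret $A^\top A=W'\otimes I_{d/m}$ as a gossip step on an inner graph $G'$, take weighted paths (\`a la Scaman et al.) for both $G$ and $G'$ so that the prescribed $\chi_W,\chiA$ are realized with diameters $\Theta(\sqrt{\chi_W})$ and $\Theta(\sqrt{\chiA})$, split Nesterov's chain cyclically over (node, subnode) slots that are far apart in the respective graphs so that each frontier advance costs one local computation plus a diameter's worth of communications or of multiplications by $\mA^\top\mA$, and finish with the geometric decay of $x^*$'s coordinates. The paper uses three sets $S_1,S_2,S_3$ (with $S_1,S_2$ far in $G'$ and $S_2,S_3$ far in $G$) rather than your four endpoint slots, but that is cosmetic.

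There is, however, one step in your sketch that fails as written: placing the non-quadratic chain part only on the \emph{endpoint} nodes of $G$ (and endpoint subnodes of $G'$). The theorem's problem class requires each individual $f_i$ to be $L_F$-smooth and $\mu_F$-strongly convex, while the regularizer that supplies strong convexity must be spread over all $nm$ subnodes to keep every $f_i$ in class. If the chain coefficient lives on $O(1)$ subnodes, per-node $L_F$-smoothness caps that coefficient at $O(L_F)$, whereas the aggregated function $\sum_{i,j}f_{ij}$ carries strong convexity $\Theta(nm\,\mu_F)$ on the consensus subspace; the effective chain condition number then collapses to $\kappa_g=O\bigl(L_F/(nm\,\mu_F)\bigr)$, and your "uniform rescaling" cannot restore it without violating the per-node moduli. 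You would only obtain $N=\Omega\bigl(\sqrt{L_F/(nm\,\mu_F)}\log\tfrac1\eps\bigr)$. The paper's fix is to take each of $S_1,S_2,S_3$ to be a product of $\lceil n/32\rceil$- and $\lceil m/32\rceil$-sized blocks (a constant fraction of all subnodes, still at mutual distance $\Theta(\mathrm{diam})$) and to normalize the chain terms by $1/|S_k|$, which yields $\kappa_g\ge L_F/(512\,\mu_F)$ independently of $n,m$. With that modification your argument goes through; the zero-chain invariant for $\nabla f_i$ and $\nabla f_i^*$ that you flag as the main obstacle is handled exactly as in the classical construction, since the conjugate of a quadratic-plus-chain function exposes the same coordinate support.
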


\tikzset{inner/.pic={
    \begin{scope}
    \def \l{0.3}
    \tikzstyle{every node}=[circle, draw, fill=black,
                        inner sep=0pt, minimum width=4pt]
    \draw (-\l,-\l) node{} -- (0,0) node {} -- ++({\l * sqrt(2)}, 0) node{};
    \end{scope}
}}

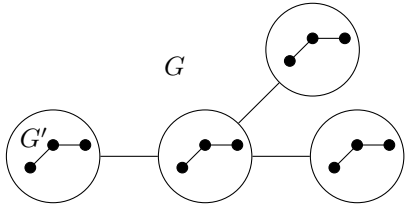
\begin{figure}
\centering
\begin{tikzpicture}
    \def \l{2}
    \begin{scope}
    \def \s{0.15}
    \tikzstyle{every node}=[circle, draw, inner sep=0pt, minimum width=35pt]
    \node (1) at (0,0) {};
    \pic at (0,\s) {inner} ;

    \node (2) at (\l,0) {};
    \pic at (\l,\s) {inner} ;

    \node (3) at ({\l * (1 + 1/sqrt(2))}, {\l / sqrt(2)}) {};
    \pic at ({\l * (1 + 1/sqrt(2))}, {\l / sqrt(2) + \s}) {inner} ;

    \node (4) at ({2 *\l},0) {};
    \pic at ({2 *\l}, \s) {inner} ;

    \draw (1) -- (2) -- (3);
    \draw (2) -- (4);
    \end{scope}

    \draw (-0.25, 0.25) node {$G'$};
    \draw (\l * 0.8, {\l * 0.6 }) node {$G$};
\end{tikzpicture}
\vspace{-0.5cm}
\caption{Two-level computational network}
\label{fig:2graph}
\end{figure}
The proof is based on the technique from \cite{scaman2017optimal} and is provided in the Appendix.

We now present a variant of Theorem~\ref{thm:lower_static} for the time-varying communication networks. 
\begin{theorem}\label{thm:lower_time-varying}
For every $L_F \geq \mu_F >0$, $\chi_A > 0$, $\tilde\lambda_{\min}^+$ and $\tilde\lambda_{\max}:$ $\frac{\tilde\lambda_{\max}}{\tilde\lambda_{\min}^+} = \chi_W \geq 3$ there exists a decentralized optimization problem \eqref{eq:lifted} (\eqref{eq:common_constraints}), 
a sequence of gossip matrices $W(k)$ satisfying \eqref{eq:gossip-spectrum}, and a matrix $\mA: \frac{\lmax(\mA^\top \mA)}{\lminp(\mA^\top \mA)} = \chi_A$, such that  any decentralized first-order algorithm, as per Definition~\ref{def:foda}, requires at least
\begin{align*}
    &N = \Theta\cbraces{\sqrt{\frac{L_F}{\mu_F}} \log\frac{1}{\varepsilon}} \text{sequential local computations}, \\
    &\Theta\cbraces{N \chi_W} \text{communications}, \\
    &\Theta\cbraces{N\sqrt{\chiA}} \text{multiplications by $\mA^\top \mA$}.
\end{align*}
to achieve 
$\sqn{x_i^N - x^*} \leq \eps~\forall i \in [1, n] \cap \N$. 
\end{theorem}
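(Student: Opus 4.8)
\emph{Proof plan.} The plan is to reuse the two-level construction behind Theorem~\ref{thm:lower_static} and to replace only its \emph{outer} ingredient. Namely, I keep (i) the generalized Nesterov-type quadratic placed on each $f_{ij}$, which is responsible for the $N=\Theta\!\left(\sqrt{L_F/\mu_F}\,\log\tfrac1\varepsilon\right)$ sequential local computations, and (ii) the static inner network $G'$ with $\mA=\sqrt{W'\otimes I_{d/m}}$, which by the argument of \cite{scaman2017optimal} already forces $\Theta(N\sqrt{\chi_A})$ multiplications by $\mA^\top\mA$ (here these multiplications play, in the inner network, exactly the role that gossip steps play in a static decentralized problem); these two parts are taken verbatim from the proof of Theorem~\ref{thm:lower_static}. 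What changes is the treatment of the outer communication graph $G$: instead of a static $G$ with a gossip matrix of condition number $\chi_W$, for which the Scaman et al.\ technique yields a $\sqrt{\chi_W}$ factor, I build a time-varying sequence $\{(G(k),W(k))\}$ obeying \eqref{eq:gossip-spectrum} for which the corresponding factor is $\chi_W$.

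First I would assemble the hard instance as a direct sum over $d=\infty$: split the coordinates into two orthogonal blocks, put the ``$\chi_A$-hard'' sub-instance of Theorem~\ref{thm:lower_static} on the first block and a ``$\chi_W$-hard, time-varying'' sub-instance on the second, and let $\mA$ act as $\sqrt{W'\otimes I}$ on the first block and as $0$ on the second, so the ratio $\lambda_{\max}(\mA^\top\mA)/\lambda_{\min}^+(\mA^\top\mA)$ is still $\chi_A$, the extra kernel being absorbed by $\lambda_{\min}^+$. Since the three admissible operations of Definition~\ref{def:foda} act coordinate-block-wise and the blocks are orthogonal, any algorithm solving the combined problem must make progress on each block separately; hence the three lower bounds may be proved in isolation, while the local-computation count does not simply add up because one gradient evaluation serves both blocks. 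The first two bounds are then exactly as in Theorem~\ref{thm:lower_static}.

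For the time-varying communication bound I would adapt the construction of \cite{kovalev2021lower}. Distribute a generalized Nesterov quadratic on the second block between two designated anchor nodes $a,b$ of $G$, with all other nodes and edges present only to keep each $G(k)$ connected, arranged so that driving the objective below $\varepsilon$ requires $\Omega(N)$ successive ``activations'' of coordinate groups, and each activation needs a fresh piece of information to be transported from $a$ to $b$ (or back). The crucial point is to choose $W(k)$, supported on the edges of $G(k)$ with $\lambda_{\min}^+(W(k))$ and $\lambda_{\max}(W(k))$ pinned inside $[\tilde\lambda_{\min}^+,\tilde\lambda_{\max}]$, so that the iterate component relevant to the next activation can contract by at most a factor $1-\Theta(1/\chi_W)$ per communication round: because the graph changes adversarially between consecutive steps, no Chebyshev-type acceleration over a fixed gossip operator is available to the algorithm, so $\Omega(\chi_W)$ consecutive communications are needed between successive activations, giving $\Omega(N\chi_W)$ communications in total. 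The assumption $\chi_W\geq 3$ is precisely what makes such a slow spectrally admissible sequence exist (with $\chi_W$ too close to $1$, or with only one or two active nodes, every connected graph is too well conditioned).

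The main obstacle is this last step — exhibiting an explicit family $\{(G(k),W(k))\}$ that is (i) connected for every $k$, (ii) spectrally admissible in the sense of \eqref{eq:gossip-spectrum} with the prescribed ratio $\chi_W$, and (iii) provably ``slow'', i.e.\ for which one can bound, uniformly over all algorithms of Definition~\ref{def:foda}, the rate at which the spans $\cM_a(k),\cM_b(k)$ acquire the coordinates needed for the next activation. As in \cite{kovalev2021lower}, the tightest such families alternate between a small number of weighted trees whose ``bottleneck'' edge rotates; verifying (ii) and (iii) for the concrete family, and checking that adjoining the static blocks responsible for $\chi_A$ and $L_F/\mu_F$ does not disturb the spectral bounds \eqref{eq:gossip-spectrum}, is the technical heart of the argument. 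Combined with the block-wise reduction above, this yields the stated bounds; the full details are deferred to the Appendix.
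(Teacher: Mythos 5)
Your high-level skeleton does match the paper's: keep the two-level construction of Theorem~\ref{thm:lower_static} (static inner graph $G'$ with $\mA=\sqrt{W'\otimes I_{d/m}}$ supplying the $\Theta(N\sqrt{\chi_A})$ bound) and replace the outer static graph by a time-varying sequence borrowed from \cite{kovalev2021lower}. But the step you yourself defer as ``the technical heart'' --- exhibiting the admissible family $\{(G(k),W(k))\}$ and proving $\Omega(N\chi_W)$ for it --- is precisely the content of the theorem, and the mechanism you sketch for it is the wrong kind of argument. You reason via a per-round contraction factor $1-\Theta(1/\chi_W)$ and the unavailability of Chebyshev acceleration; in the span-based oracle model of Definition~\ref{def:foda} a lower bound cannot rest on how slowly a component ``contracts,'' since a single communication can place a coordinate into a neighbor's span in full. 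The paper instead invokes the purely combinatorial construction of \cite{kovalev2021lower} (its Lemma~2): take $n=3\lfloor\chi_W/3\rfloor$ outer nodes split into $V_1,V_2,V_3$, let each $G(k)$ be a star whose center cycles through $V_2$, and put the two interacting halves of the Nesterov splitting on $V_1$ and $V_3$. The star Laplacian has $\lmax/\lminp=n\le\chi_W$, yet the rotation of the center traps a newly generated coordinate so that it takes $\Theta(|V_2|)=\Theta(\chi_W)$ rounds to travel from $V_1$ to $V_3$; this is a statement about which coordinates can possibly be nonzero, not about spectral decay. Your picture of ``two anchor nodes $a,b$ plus filler edges for connectivity'' does not deliver this: every-step connectivity with the prescribed spectrum does not by itself prevent $O(1)$-round information transfer between two fixed nodes, so the bound would not follow.

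A secondary divergence: you assemble the instance as a direct sum of orthogonal blocks, one ``$\chi_A$-hard'' and one ``$\chi_W$-hard,'' whereas the paper chains all three resources on a single Nesterov splitting over $S_1=V_1\times S_{1G'}$, $S_2=V_1\times S_{2G'}$ and $S_3\subset V_3\times V'$, so that each increase of the number of nonzero coordinates forces, in order, a computation in $S_1$, $\Theta(\sqrt{\chi_A})$ multiplications by $\mA^\top\mA$, a computation in $S_2$, $\Theta(\chi_W)$ outer communications, and a computation in $S_3$. Your direct sum is in principle acceptable for a lower bound (accuracy on the combined problem forces accuracy on each block, and the spectral ratios survive because $\lminp$ ignores the added kernel), but it is additional machinery to verify and it does not repair the missing time-varying information-propagation argument above, which is where the proof actually lives.
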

The proof is based on \cite{kovalev2021lower} and is provided in the Appendix.

\section{Application of ADOM}
By standard duality arguments we rewrite problem~\eqref{eq:common_constraints} as
\begin{align*}
	\min_{\bx} &\max_{\bs \in \cLp,\bp} \sbraces{F(\bx) - \angles{\bs, \bx} - \angles{\bp, \bA \bx - \bb}} \\
	&= \max_{\bs \in \cLp ,\bp} \sbraces{-\max_\bx\sbraces{\angles{\bs + \bA^\top \bp, \bx} - F(\bx)} + \angles{\bp, \bb}} \\
	&= \max_{\bs \in \cLp,\bp} \sbraces{-F^*(\bs + \bA^\top \bp) + \angles{\bp, \bb}} \\
	&= -\min_{\bs \in \cLp ,\bp} \sbraces{F^*(\bs + \bA^\top \bp) - \angles{\bp, \bb}},
\end{align*}
where $F^*$ is the convex (Fenchel) conjugate of $F$.
Therefore, problem~\eqref{eq:common_constraints} is equivalent to
\begin{align*}
 \min_{\bs \in \cLp ,\bp} \sbraces{F^*(\bs + \bA^\top \bp) - \angles{\bp, \bb}}.
\end{align*}
Introduce 
\begin{align*}
\bz = \begin{pmatrix} \bp \\ \bs \end{pmatrix},
~ \bq = \begin{pmatrix} \bb \\ 0 \end{pmatrix},
~ \bB = \begin{pmatrix} \bA \\ I_{nd} \end{pmatrix},
~ \bP = \begin{pmatrix} I_{\dim{(\bb)}} & 0 \\ 0 & I_{nd} - \frac{1}{n}\cbraces{\one_n\one_n^\top}\otimes I_d \end{pmatrix},~
\\
\bW(k) = \begin{pmatrix} I_{\dim(\bb)} & 0 \\ 0 & W(k)\otimes I_d \end{pmatrix},
\end{align*}
where $\dim(\bb)$ is the number of components in vector $\bb$.
Also denote $H(\bz) = F^*(\bB^\top \bz) - \angles{\bz, \bq}$. Now we can equivalently rewrite optimization problem~\eqref{eq:common_constraints} as
\begin{align}\label{eq:problem_common_constraints_dual}
\min_{\bz\in\image\bP}~ H(\bz).
\end{align}
After that, we apply ADOM~\cite{kovalev2021adom} to the problem~\eqref{eq:problem_common_constraints_dual}.
\begin{algorithm}[ht]
\caption{ADOM with local affine constraints}
\label{alg:adom_common_constraints}
\begin{algorithmic}[1]
	\REQUIRE{$\bz^0\in\image\bP\bB,~ \bm^0\in\image\bP\bB,~ \alpha, \eta, \theta, \sigma> 0,~ \tau\in (0, 1)$}
	\STATE{Set $\bz_f^0 = \bz^0$}
	\FOR{$k = 0, 1, \ldots$}
	\STATE{$\bz_g^k = \tau \bz^k + (1 - \tau)\bz_f^k$}\label{adom:line:zg}
	\STATE{$\Delta^k = \sigma\bW(k)(\bm^k - \eta\nabla H(\bz_g^k))$}\label{adom:line:delta}
	\STATE{$\bm^{k+1} = \bm^k - \eta\nabla H(\bz_g^k) - \Delta^k$}\label{adom:line:m}
	\STATE{$\bz^{k+1} = \bz^k + \eta\alpha (\bz_g^k - \bz^k) + \Delta^k$}\label{adom:line:z}
	\STATE{$\bz_f^{k+1} = \bz_g^k - \theta\bW(k)\nabla H(\bz_g^k)$}\label{adom:line:zf}
	\ENDFOR
\end{algorithmic}
\end{algorithm}

Note that $b_i\in \image A_i$ and therefore $\bq\in\image \bB$. Therefore, for any $\bz\in\R^{nd}$ we have $\nabla H(\bz) = \bB\nabla F^*(\bB^\top \bz) - \bq\in\image\bB$. We conclude that the iterates $\bz^k, \Delta^k, \bz_f^k, \bz_g^k$ of Algorithm~\ref{alg:adom_common_constraints} lie in $\image\bP\bB = \image\bA\otimes\cL^\bot$, and $\bm^k\in\image\bB = \image\bA\otimes\R^{nd}$.

On the subspace $\image\bP\bB$ we estimate strong convexity and smoothness constants as
\begin{align*}
	\mu_H = \frac{1 + (\sminp(A))^2}{L_F},~ L_H = \frac{1 + \smax^2(A)}{\mu_F}.
\end{align*}
From \eqref{eq:gossip-spectrum}  we have
\begin{equation}\label{eq:spectrum}
\lambda_{\min}^+ \defeq \min(1, \tilde\lambda_{\min}^+) \leq \lambda_{\min}^+(\mW(k))\leq \lambda_{\max}(\mW(k))\leq \max(1, \tilde\lambda_{\max}) \eqdef \lambda_{\max}.
\end{equation}

Now we can formulate the key convergence result. 
\begin{theorem}\label{thm:main}
	Set parameters $\alpha, \eta, \theta, \sigma,\tau$ of Algorithm~\ref{alg:adom_common_constraints} to $\alpha = \frac{\muH}{2}$, $\eta = \frac{2\lminp}{7\lmax\sqrt{\muH \LH}}$, $\theta = \frac{1}{\LH \lmax}$, $\sigma = \frac{1}{\lmax}$, and $\tau = \frac{\lminp}{7\lmax}\sqrt{\frac{\muH}{\LH}}$. 
Then there exists $C>0$, such that for Algorithm~\ref{alg:adom_common_constraints} applied for problem~\eqref{eq:common_constraints} it holds
	\begin{equation}
	\squeeze 
		\sqn{\g F^*(\mB^\top \bz_g^N) - \bx^*} \leq C \left(1- \frac{\lminp}{7\lmax} \sqrt{\frac{\muH}{\LH}}\right)^N,
	\end{equation}
\end{theorem}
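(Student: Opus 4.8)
The plan is to obtain the statement as a corollary of the convergence theorem for ADOM from \cite{kovalev2021adom}, applied to the dual reformulation \eqref{eq:problem_common_constraints_dual}. Three points must be addressed: (i) that $H$ is $\muH$-strongly convex and $\LH$-smooth on the subspace that actually contains the iterates, with the constants recorded before the theorem; (ii) that the prescribed step sizes are a legitimate instantiation of ADOM's parameter rules for such a problem over gossip matrices of spectral range $[\lminp,\lmax]$; and (iii) that linear contraction in the dual variable $\bz$ transfers to the recovered primal point $\g F^*(\mB^\top\bz_g^N)$.

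For (i), recall that $\mu_F$-strong convexity and $L_F$-smoothness of $F$ make $F^*$ simultaneously $\tfrac1{L_F}$-strongly convex and $\tfrac1{\mu_F}$-smooth. As noted after Algorithm~\ref{alg:adom_common_constraints}, $\nabla H(\bz) = \bB\nabla F^*(\bB^\top\bz) - \bq \in \image\bB$, and the iterates $\bz^k,\bz_g^k,\bz_f^k,\bm^k$ stay in $\image\bP\bB = \image\bA\otimes\cLp$ (given $\bz^0,\bm^0\in\image\bP\bB$), so this subspace is invariant under every operation of the algorithm. On it, $\bB^\top\bB = \bA^\top\bA + I$ has spectrum in $[\,1+(\sminp(A))^2,\ 1+\smax^2(A)\,]$, which when composed with $F^*$ gives exactly $\muH=\tfrac{1+(\sminp(A))^2}{L_F}$ and $\LH=\tfrac{1+\smax^2(A)}{\mu_F}$, as stated. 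The lifted gossip matrices $\bW(k)$ obey \eqref{eq:spectrum}: the appended identity block contributes the eigenvalue $1\in[\lminp,\lmax]$, and the $W(k)\otimes I_d$ block is governed by \eqref{eq:gossip-spectrum}.

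For (ii), the choices $\alpha=\tfrac{\muH}{2}$, $\eta=\tfrac{2\lminp}{7\lmax\sqrt{\muH\LH}}$, $\theta=\tfrac1{\LH\lmax}$, $\sigma=\tfrac1{\lmax}$, $\tau=\tfrac{\lminp}{7\lmax}\sqrt{\muH/\LH}$ are precisely the rule from \cite{kovalev2021adom} for a $(\muH,\LH)$-problem with gossip condition number $\lmax/\lminp$. Invoking ADOM's convergence theorem then produces a Lyapunov function $\Psi^k$ — a positive combination of $\sqn{\bz^k-\bz^*}$, $\sqn{\bm^k-\bm^*}$ and a functional-suboptimality term $H(\bz_f^k)-H(\bz^*)$ — that contracts by the factor $1-\tfrac{\lminp}{7\lmax}\sqrt{\muH/\LH}$ per iteration. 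Since $\muH$-strong convexity makes the functional term dominate $\tfrac{\muH}{2}\sqn{\bz_f^k-\bz^*}$, and $\bz_g^N=\tau\bz^N+(1-\tau)\bz_f^N$ is a convex combination, we obtain $\sqn{\bz_g^N-\bz^*}\le C'\,(1-\tfrac{\lminp}{7\lmax}\sqrt{\muH/\LH})^N$ with $C'$ depending only on $\bz^0,\bm^0$.

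For (iii), the optimality conditions for \eqref{eq:problem_common_constraints_dual} give $\bx^*=\g F^*(\mB^\top\bz^*)$, so $\tfrac1{\mu_F}$-Lipschitzness of $\g F^*$ yields
\begin{equation*}
\sqn{\g F^*(\mB^\top\bz_g^N)-\bx^*}\le\frac{1+\smax^2(A)}{\mu_F^2}\,\sqn{\bz_g^N-\bz^*},
\end{equation*}
and combining with the previous bound closes the proof with a suitable $C$. I expect the crux to be point (i): rigorously justifying that $\muH$ and $\LH$ are the correct moduli \emph{on the affine subspace $\image\bP\bB$} — on the full $\bz$-space $H$ is merely convex, because $\bB^\top$ has a nontrivial kernel — and verifying that this subspace is genuinely invariant under every line of Algorithm~\ref{alg:adom_common_constraints}, so that ADOM's analysis, which presumes a truly strongly convex problem over the relevant subspace, applies verbatim.
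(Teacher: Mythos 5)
Your proposal is correct and follows essentially the same route as the paper: the paper's appendix re-derives the ADOM Lyapunov analysis for $H$ on $\image\bP\bB$ (using $\mP=\mP^2$, $\mP\mW(k)=\mW(k)$, and the spectrum bound \eqref{eq:spectrum}) with exactly the constants $\muH,\LH$ and parameter settings you list, and then transfers the contraction of $\Psi^k$ to $\sqn{\g F^*(\mB^\top\bz_g^N)-\bx^*}$ via smoothness of $H$ and strong convexity on that subspace, just as in your points (ii)--(iii). The only cosmetic differences are that the paper carries out the ADOM proof explicitly rather than citing it as a black box, and bounds the primal error through $\LH$-smoothness of $H$ (giving $\LH^2\sqn{\bz_g^N-\bz^*}$) rather than through Lipschitzness of $\g F^*$ composed with $\smax(\bB)$; you also correctly identify the crux the paper relies on, namely invariance of $\image\bP\bB$ and strong convexity of $H$ only on that subspace.
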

where $\bx^*$ is the solution of the problem~\ref{eq:common_constraints}.

The proof is a minor modification of original ADOM convergence proof in \cite{kovalev2021adom}, and is provided in the appendix for reader's convenience.

As a corollary of Theorem \ref{thm:main} we have the following communication, dual-oracle call and matrix multiplication complexity: 
\begin{align*}
	N \leq O\cbraces{\frac{\max(1, \tilde\lambda_{\max})}{\min(1,\tilde\lambda_{\min}^+) }\sqrt{\frac{L_F}{\mu_F}}\sqrt\frac{1+ \sigma_{\max}^2(A)}{1 + (\sigma_{\min}^+(A))^2}\log\cbraces{\frac{1}{\eps}}},
\end{align*}
where $\eps$ is the desired accuracy of the approximate solution: $\sqn{\g F^*(\mB^\top \bz_g^N) - \bx^*} \leq \eps$.

\section{Separating complexities}\label{sec:separate}
As shown in \cite{kovalev2021lower}, one can separate oracle (computation) and communication complexities in the time-varying setup by using the multi-consensus procedure.
This will not change (up to the $\ln 2$ factor) the number of communications but reduces the number of oracle calls.

In the time-varying setup, acceleration over $\chi = \frac{\lmax}{\lminp}$ is not applicable, as stated by Theorem~\ref{thm:lower_time-varying}, so we can separate communication complexity but can not improve it.
However, since $\mA$ is constant, we can use Chebyshev acceleration over $\mA$ to separate matrix multiplication complexity and decrease the number of multiplications by $\mA$, as was done for communication complexity in the static communication graph setup in \cite{scaman2017optimal}.

For this section we will assume that $W(k)$ are divided by their maximum eigenvalue and therefore $\lmax = \tilde\lambda_{\max}  = 1$ and $\lminp = \tilde\lambda_{\min}^+$.
In practice this could be achieved by using $I - M(k)$ instead of $W(k)$, there $M(k)$ are the Metropolis weight matrices \cite{kovalev2021lower}.

Let $P(x)$ be a polynomial such that $P(0) = 0$ and $P(\lambda_i) \neq 0$ for all eigenvalues $\lambda_i$ of $A$.
Then, using the fact that $\ker M = \ker M^T M$, we can do the following sequence of equivalent reformulations
\begin{align*}
    \mA \bx = \bb  
    &\Leftrightarrow 
    \mA \bx = \mA \bx_0  
    \Leftrightarrow 
    \mA^\top \mA \bx = \mA^\top \mA \bx_0
    \\&\Leftrightarrow  
    P(\mA^\top \mA) \bx = P'(\mA^\top \mA)\mA^\top \mA \bx_0
    \\&\Leftrightarrow  
    P(\mA^\top \mA) \bx = P'(\mA^\top \mA)\mA^\top \bb.
\end{align*}
where $\bx_0$ is any vector satisfying $\mA \bx_0 = \bb$ (here we used consistency of constraints),
and $P'(x) = P(x)/x$ is correctly defined since $P(0) = 0$.

Thus the idea is to replace $\mA$ with $P(\mA^\top \mA)$ to improve the spectral properties of the matrix.
The polynomials of choice are shifted and scaled Chebyshev polynomials \cite{scaman2017optimal}, because Chebyshev polynomials increase magnitude more quickly than any other polynomials of the same degree satisfying $|P(x)| \leq 1~ \forall x \in [-1,1]$.
This allows to significantly compress the spectrum, using polynomials of a relatively low degree.
In particular, let $P(x)$ be defined as
\begin{align*}
    P(x) = 1 - \frac{T_K\left(-\nu + \frac{2}{\lmax - \lminp}x\right)}{T_K(-\nu)},~ K = \floor{\sqrt \chiA},
\end{align*}
where $\chiA = \smax(\mA^T \mA) / \sminp(\mA^T \mA)$, $\nu = \frac{\chi +1}{\chi -1}$ and $T_K$ are Chebyshev polynomials defined by $T_0(x) = 1$, $T_1(x) = x$ and $T_{K+1} = 2x T_K(x) - T_{K-1}(x)$ for $K \geq 1$.
If $P$ has degree $K = \floor{\sqrt \chiA}$, then $\chi\left( P(\mAT \mA)\right) \leq 4$ \cite{scaman2017optimal}, which allows to quadratically improve the dependence of convergence rate on $\chi_A$
by replacing $\mA$ with $P(\mA^\top \mA)$ and $\bb$ with $P'(\mAT \mA)  \mAT \bb$.

To separate communication complexity, a multi-consensus procedure should be used, i.e. $W(k)$ should be replaced with $D(W(k)) = I - (I - W(k))^{K}$, where $K = \ceil{\chi \ln 2}$, what makes $\chi = O(1)$ at the cost of $K$ communication rounds \cite{kovalev2021lower}.

Finally, by replacing $\mA \to P(\mAT \mA)$, $\bb \to P'(\mAT \mA)  \mAT \bb$ and $W(k) \to D(W(k))$ we obtain following complexity estimates to reach $\sqn{\g F^*(\mB^\top \bz_g^k) - \bx^*} \leq \eps$ for ADOM algorithm with affine constraints, Chebyshev acceleration, and multi-consensus:
\begin{align*}
    &N = O\cbraces{\sqrt{\frac{L_F}{\mu_F}} \log\frac{1}{\varepsilon}} \text{ oracle calls at each node}, \\
    &O\cbraces{N \chi} \text{ communications}, \\
    &O\cbraces{N\sqrt{\chiA}} \text{ multiplications by $\mA^\top \mA$}.
\end{align*}
These upper bounds match the lower bounds of Theorem~\ref{thm:lower_time-varying}, thus the obtained algorithm is optimal among first-order decentralized algorithms for strongly convex problems with affine constraints on time-varying networks.

\section{Numeric validation}
We verify Theorem~\ref{thm:main} with numeric experiments\footnote{Source code: \href{https://github.com/niquepolice/ADOM_affine_constraints}{https://github.com/niquepolice/ADOM\_affine\_constraints}}
on problems with the quadratic objective:

\begin{align*}
&\sum_{i=1}^n f_i(x) \to \min_x
\\\text{s.t. }& Ax = b,
\\f_i(x) &= \frac12 x^T C_i x + d_i^T x, \;  \mu_F I \preceq C_i \preceq L_F I.
\end{align*}

From the results of Section~\ref{sec:separate}, the influence of the gossip matrix's spectrum and the affine constraint matrix spectrum on the convergence rates of Algorithm \ref{alg:adom_common_constraints} is straightforward to comprehend.
Therefore, we focus only on the impact of the objective's condition number on the convergence rates.

Our numerical experiments are not designed to simulate real-world problems; rather, they serve as illustrations of the algorithm's theoretical properties.
This is because quadratic objectives are good representatives of the smooth and strongly convex problem class.

It is not difficult to implement exact dual oracle for this objective, but we do not want to exploit the simplicity of quadratic problem, and, following \cite{kovalev2021adom}, we obtain an approximation of $\bg^k \approx \nabla F^*(\mBT \bz_g^k)$ by using few gradient steps at each iteration: $\bg^{k} = \bg^{k-1} - \frac{1}{L_F}(\nabla F(\bg^{k-1}) - \mBT \bz_g^k)$. 
So in fact the implemented algorithm uses a primal oracle because dual oracle call in Algorithm \ref{alg:adom_common_constraints} is replaced with primal oracle call.

Experiment parameters are $d = 20$, $A \in \R^{10 \times d}$, $\chiA = 20$, $\mu_F=1$.
Communication graphs $G(k)$ are random ring graphs at each iteration.
We run Algorithm~\ref{alg:adom_common_constraints} for $N=2500$ iterations for different values of $L_F$, and do the linear regression to obtain the coefficient $\kappa$ in the dependence $\|\bg^k - \bx^*\|_2 = C_1 \exp(-\kappa N)$ using only last $N/2$ iterations.
This is illustrated in Figure~\ref{fig:conv}.
In all cases a steady linear convergence to the solution is present.

Then we do the linear regression in the log-log scale to obtain the coefficient $\nu$ in the dependence
$\kappa = C_2 \cbraces{\frac{L_F}{\mu_F}}^\nu$, as shown in Figure~\ref{fig:nu}.
The resulting value is $\nu \approx 0.54$ with the standard error of $\approx 0.03$, which is rather close to the value $\nu= \frac12$ in the Theorem~\ref{thm:main}.

\begin{figure}
    \centering
    \includegraphics[width=0.9\textwidth]{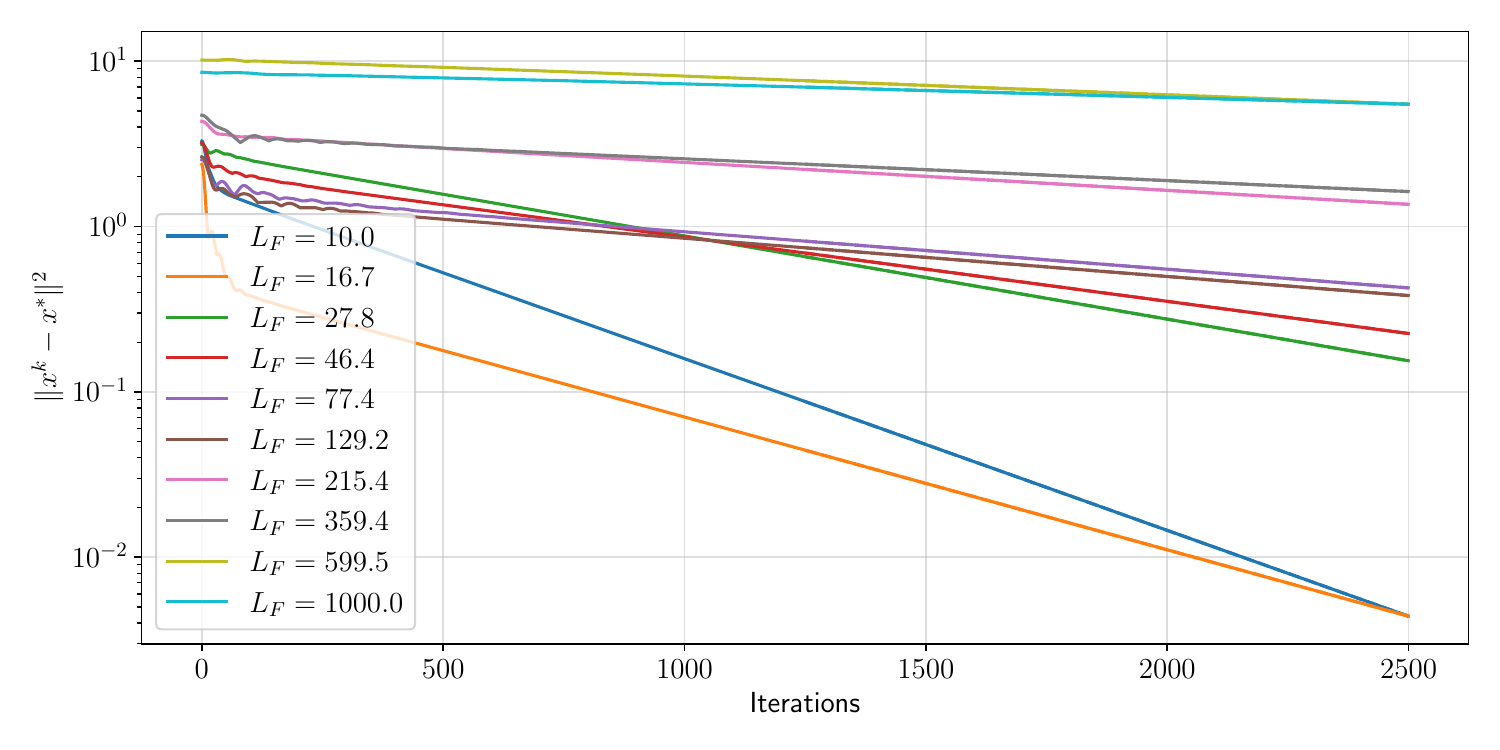}
    \caption{Algorithm \ref{alg:adom_common_constraints} convergence for different values of $L_F$}
    \label{fig:conv}
\end{figure}

\begin{figure}
    \centering
    \includegraphics[width=0.6\textwidth]{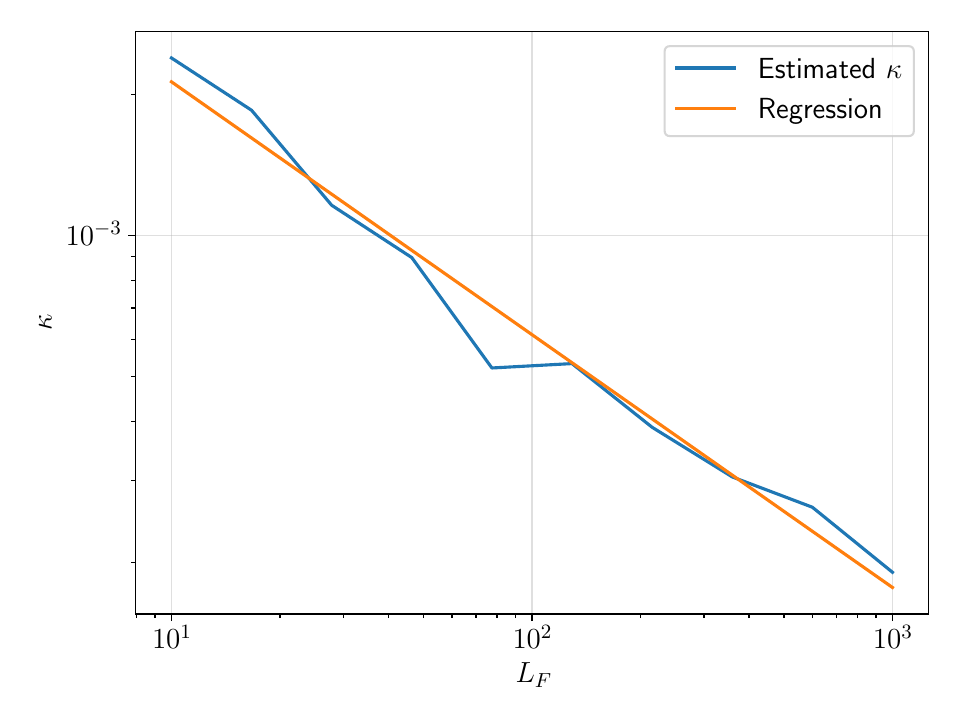}
    \caption{Regression for $\nu$}
    \label{fig:nu}
\end{figure}

\section{Conclusion}
By viewing the affine-constrained problem as a decentralized optimization problem alike \cite{salim2022optimal}, and combining constructions of lower bound for static \cite{scaman2017optimal} and time-varying \cite{kovalev2021lower} setups, lower bounds for decentralized optimization with affine constraints over static and time-varying networks via first-order methods were obtained.

As we found, the ADOM algorithm can be straightforwardly extended to the affine-constrained case.
For this problem class, we were also able to apply Chebyshev acceleration over $A$, and the resulting complexity estimates match the lower bounds.

However, a lot of questions are left for the future work.
We did not succeed to provide an extension of the ADOM+ algorithm \cite{kovalev2021lower} to the affine-constrained problems, thus no linearly convergent primal algorithm is known for this problem class .
It is also of interest to obtain optimal algorithms for time-varying networks in case of shared affine inequality constraints $\sum_i \cbraces{A_i x_i - b_i}\leq 0$, where $A_i$ and $b_i$ are held privately by $i$-th agent. This problem variant has more practical applications \cite{wang2022distributed,yarmoshik2022decentralized,necoara2011parallel}, but also brings additional difficulties to the theoretical analysis, e.g. in this case $b_i$ might not belong to $\image A_i$. 
This means that we cannot apply our approach, because it requires a gradient method to stay in the subspace where the objective is strongly convex.

\section{Acknowledgements}

This work was supported by a grant for research centers in the field of artificial intelligence, provided by the Analytical Center for the Government of the Russian Federation in accordance with the subsidy agreement (agreement identifier 000000D730321P5Q0002) and the agreement with the Moscow Institute of Physics and Technology dated November 1, 2021 No. 70-2021-00138.
\section{Appendix}


\subsection{Proof of Theorem~\ref{thm:lower_static}}

\begin{proof}
Let the affine constraint in problem \ref{eq:lifted} be $A_i x_i = 0$, with $A_i=A=\sqrt{W' \otimes I_{d/m}}$.
Then the affine constrained decentralized problem can be seen as two-level decentralized problem, as explained above.

Select sets of subnodes $S_1$, $S_2$ and $S_3$ such that $S_1$, $S_2$ are at the distance~$\geq \Delta_A$ through the inner graph,
and $S_2$, $S_3$ are at the distance~$\geq \Delta_W$  through the outer graph.
Consider the following splitting of the Nesterov's ``bad'' function
\begin{align}\label{eq:splitting}
f_{ij}(x) = 
\frac\alpha{2mn} \sqn{x}
+ 
\frac{\beta - \alpha}{8} \cdot
  \begin{cases}
  \frac1{|S_1|}\cbraces{x^\top M_1 x - 2x_{[1]}},~ (i,j) \in S_1,\\
  \frac1{|S_2|}x^\top M_2 x,~ (i,j) \in S_2,\\
  \frac1{|S_3|}x^\top M_3 x,~ (i,k) \in S_3,\\
  0,~ \text{otherwise,}
  \end{cases}
\end{align}
where 
\begin{align*}
M_1 &= \diag{(1, 0, M_0, 0, M_0, \ldots)},\\
M_2 &= \diag{(M_0, 0, M_0, 0, \ldots)},\\
M_3 &= \diag{(0, M_0, 0, M_0, \ldots)},\\
M_0 &= \begin{pmatrix}
1 &-1\\
-1& 1 \end{pmatrix}.
\end{align*}


Then increasing the number of nonzero components in $x^k$ on any subnode by three requires one local computation on a node in $S_1$, $\Delta_A$ inner communications a.k.a. multiplications by $A^\top A$, one local computation on a node in $S_2$, $\Delta_W$ communications in the outer graph and one local computation on a node in $S_3$.
Denote by $\kappa_g = \frac{\beta}{\alpha}$ the ``global'' condition number of $\sum_{ij}^{mn}f_{ij}$.
Since the solution is 
$x^*_k = \left(\frac{\sqrt \kappa_g - 1}{\sqrt \kappa_g +1 }\right)^k$, we have
\begin{equation}\label{eq:nesterov-lower}
\sqn{x^N - x^*} \geq \sum_{k=N+2}^\infty (x^*_k)^2 \geq \left(\frac{\sqrt \kappa_g - 1}{\sqrt \kappa_g +1 }\right)^{N+2},
\end{equation}
where $N$ is the number of iterations, each including 3 sequential computational steps, $\Delta_A$ multiplications by $A^TA$ and $\Delta_W$ communications.


To finish the proof we need to construct communication graphs $G$, $G'$, where distances between $S_1$, $S_2$ and $S_2$, $S_3$ are close to $\Delta_A, \Delta_W$, and equip the graphs with gossip matrices with given condition numbers $\chi_A, \chi_W$.

We should also choose $\alpha$ and $\beta$ such that $f_i$ are $L_F$-smooth and $\mu_F$-strongly convex,
and choose $S_1$, $S_2$, $S_3$ so that $\kappa_g$ is similar to $\frac{L_F}{\mu_F}$. 

Denote $\gamma(M) = \sigma_{\min}^{+}(M) / \sigma_{\max}(M)$, $\gamma_W = 1/\chi_W$, $\gamma_A = 1/\chi_A$.
Let $\gamma_n=\frac{1-\cos \left(\frac{\pi}{n}\right)}{1+\cos \left(\frac{\pi}{n}\right)}$ be a decreasing sequence of positive numbers. 
Since $\gamma_2=1$ and $\lim _n \gamma_n=$ 0, there exists $n \geq 2$ such that $\gamma_n \geq \gamma >\gamma_{n+1}$ and 
$m \geq 2$ such that $\gamma_m \geq \gamma >\gamma_{m+1}$.

First, construct graph $G$.
The cases $n=2$ and $n \geq 3$ are treated separately.
If $n \geq 3$, let $G$ be the linear graph of size $n$ ordered from node $1$ to $n$, and weighted with 
$w_{i, i+1}=\begin{cases}1-a,  i=1\\1, \text{otherwise}\end{cases}.$
Then set 
$S_{2G}=\left\{1, \ldots, \lceil n / 32\rceil\right\}$ and $\Delta_W=(1-1 / 16) n-1$, so that 
$S_{3G} = \{\lceil n / 32\rceil + \lceil \Delta_W\rceil, \ldots, n\}$.


Take $W_a$ as the Laplacian of the weighted graph $G$. 
A simple calculation gives that, if $a=0$, $\gamma\left(W_a\right)=\gamma_n$ and, if $a=1$, the network is disconnected and $\gamma\left(W_a\right)=0$.
Thus, by continuity of the eigenvalues of a matrix, there exists a value $a \in[0,1]$ such that $\gamma\left(W_a\right)=\gamma_W$.
Finally, by definition of $n$, one has $\gamma_W>\gamma_{n+1} \geq \frac{2}{(n+1)^2}$, and $\Delta_W \geq \frac{15}{16}\left(\sqrt{\frac{2}{\gamma_W}}-1\right)-1 \geq \frac{1}{5 \sqrt{\gamma_W}}$ when $\gamma_W \leq \gamma_3=\frac{1}{3}$.

For the case $n=2$, we consider the totally connected network of 3 nodes, reweight only the edge $\left(1, 3\right)$ by $a \in[0,1]$, and let $W_a$ be its Laplacian matrix.
If $a=1$, then the network is totally connected and $\gamma\left(W_a\right)=1$.
If, on the contrary, $a=0$, then the network is a linear graph and $\gamma\left(W_a\right)=\gamma_3$.
Thus, there exists a value $a \in[0,1]$ such that $\gamma\left(W_a\right)=\gamma$.
Set $S_{2G}=\left\{1\right\}, S_{3G}=\left\{2\right\}$, then $\Delta_W=1 \geq \frac{1}{\sqrt{3 \gamma_W}}$.

Second, do the same for graph $G'$, obtaining $m$, $S_{1G'}, S_{2G'}$ and $\Delta_A \geq \frac{1}{5\sqrt{\gamma_A}}$.

Define $S_1 = S_{2G} \times S_{1G'}$, $S_2 = S_{2G} \times S_{2G'}$ and $S_3 = S_{3G} \times S_{2G'}$, see Fig. \ref{fig:func-splitting}.
In all cases we have $|S_k| \geq |S_2| \geq \lceil\frac{n}{32}\rceil \lceil \frac{m}{32} \rceil$ for $k\in\{1,3\}$.

Because $\mu_F = \frac{\alpha}{n}$, we set $\alpha = \mu_F n$.
Since $0 \preceq M_k \preceq 2I$ for $k\in\{1,2,3\}$, $L_F = \frac{\alpha}{n} + \frac{(\beta-\alpha)m}{2|S_2|}$, thus set $\beta=2|S_2|(L_F-\mu_F)/m + \mu_F n$ to make all $f_i$ be $L_F$-smooth and $\mu_F$-strongly convex.
Then $\kappa_g = \frac{\beta}{\alpha} = 1 + \frac{2|S_2|(L_F - \mu_F)}{\mu_F mn} \geq \frac{L_F}{512 \mu_F}$.
Combining this with \eqref{eq:nesterov-lower} and the inequalities between $\Delta_A, \gamma_A$ and $\Delta_W, \gamma_W$ we conclude the proof. 
\end{proof}

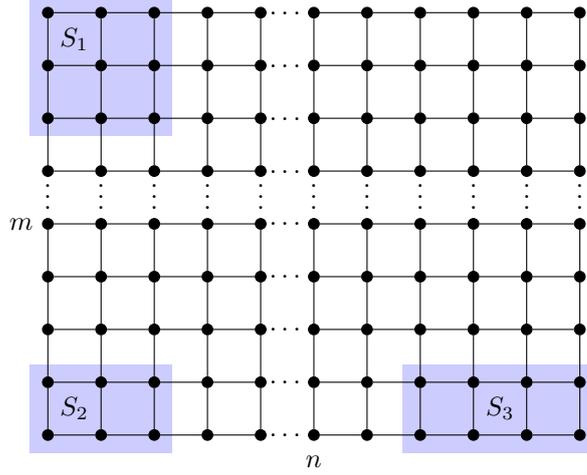
\begin{figure}[h]
\centering
\begin{tikzpicture}
    \def \m{8}
    \def \n{10}
    \def \l{0.7}
    \def \ma{\l/3} 
    \begin{scope}

    \fill [fill=blue!20,thick]
      (-\ma,-\ma) rectangle ({2 * \l + \ma}, {1 * \l + \ma});

    \fill [fill=blue!20,thick]
      (-\ma,{\m * \l + \ma}) rectangle ({2 * \l + \ma}, {(\m - 2) * \l - \ma});

    \fill [fill=blue!20,thick]
      ({\n * \l + \ma}, -\ma) rectangle ({(\n - 3) * \l - \ma}, {1 * \l + \ma});

    \tikzstyle{every node}=[circle, draw, fill=black,
                        inner sep=0pt, minimum width=4pt,draw]
    \foreach \ni in {0,...,\n}
    \foreach \mi in {0,...,\m} 
        \node (\ni x \mi) at ({\ni * \l}, {\mi * \l}) {};

    \end{scope}

    \def \st{}
    \foreach \ni in {1,...,\n}
    \foreach \mi in {0,...,\m} 
      \pgfmathsetmacro\k{int(\ni-1)}
      \ifthenelse{\ni=5}{\path (\k x \mi) --node[auto=false]{\ldots} (\ni x \mi);}{\draw (\k x \mi) -- (\ni x \mi);}
      ;

    \foreach \ni in {0,...,\n}
    \foreach \mi in {1,...,\m} 
      \pgfmathsetmacro\k{int(\mi-1)}
      \ifthenelse{\mi=5}{\path (\ni x \k) --node[above=-0.27cm,auto=false]{\vdots} (\ni x \mi);}{\draw (\ni x \k) -- (\ni x \mi);}
      ;

    \draw ({\l/2}, {\l/2}) node {$S_2$};
    \draw ({\l/2}, {\m * \l - \l/2}) node {$S_1$};
    \draw ({\n * \l - 3 * \l/2}, {\l/2}) node {$S_3$};

    \draw ({\l * \n / 2}, {-\l/2}) node {$n$};
    \draw ({-\l/2}, {\l * \m / 2}) node {$m$};
\end{tikzpicture}
\caption{Splitting of Nesterov's ``bad'' function in the main case $m,n \geq 3$, namely $m=20, n=80$. The vertical dimension corresponds to the inner graph $G'$, and the horizontal dimension~--- to the outer graph $G$}
\label{fig:func-splitting}
\end{figure}

\subsection{Proof of Theorem~\ref{thm:lower_time-varying}}
\begin{proof}
As in the proof of Theorem~\ref{thm:lower_static} we set the affine constraint in problem \ref{eq:lifted} to be $A_i x_i = 0$, with $A_i=A=\sqrt{W' \otimes I_{d/m}}$, where $W'$ is a gossip matrix of some inner communication graph $G'$.
Let the sequence of outer communication graphs $G(k)$ be the same as in the proof of Theorem 1 in \cite{kovalev2021lower}: $n= 3 \floor{\chi_W/3}$ nodes are split into three disjoint sets $V_1, V_2, V_3$ of equal size, and $G(k) = (V, E(k))$ are star graphs with the center nodes cycling through $V_2$.
Choose the inner communication graph $G' = (V', E')$ as in the proof of Theorem~\ref{thm:lower_static}.
Use Nesterov's function splitting given by \eqref{eq:splitting}, choose $S_{1G'}$ and $S_{2G'}$ as in the proof of Theorem~\ref{thm:lower_static}.
Set $S_1 = V_1 \times S_{1G'}$, $S_2 = V_1 \times S_{2G'}$ and $S_3 = V_3 \times E'$.
Setting $W(k)$ to be the Laplacian of the star graph $G(k)$ we have $\frac{\lmax(W(k))}{\lmin(W(k))} =n \leq \chi_W$.
Also (Lemma~2 \cite{kovalev2021lower} and proof of Theorem~\ref{thm:lower_static}), increasing the number of nonzero components of $x_k$  on any subnode requires local computation on a node in $S_1$, $\Theta\cbraces{\sqrt{\chi_A}}$ communications in the inner graph $G'$ (i.e. multiplications by $A^\top A$), one local computation on a node in $S_2$, $\Theta\cbraces{\chi_W}$ communications in the outer graph $G$ and one local computation on a node in $S_3$.
Same reasoning as in the proof of the previous theorem gives $\kappa_g = \Theta\cbraces{\frac{L_F}{\mu_F}}$, then using \eqref{eq:nesterov-lower} we conclude the proof.

\end{proof}

\subsection{Auxiliary lemmas for Theorem~\ref{thm:main}}
\begin{lemma}\label{dual:lem:descent}
	For $\theta \leq \frac{1}{\LH\lmax}$ we have the inequality
	\begin{equation}\label{dual:eq:1}
	\squeeze 
	H(\bz_f^{k+1}) \leq H(\bz_g^k) - \frac{\theta\lminp}{2}\sqn{\g H(\bz_g^k)}_{\mP}.
	\end{equation}
\end{lemma}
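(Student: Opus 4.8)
The plan is to treat Lemma~\ref{dual:lem:descent} as the standard gradient-descent-type decrease for a step restricted to the range of the gossip matrix, exactly as in the ADOM analysis of \cite{kovalev2021adom}, but with the spectral bounds of $\bW(k)$ coming from \eqref{eq:spectrum} rather than from a single gossip matrix. First I would recall the update rule in line~\ref{adom:line:zf}, $\bz_f^{k+1} = \bz_g^k - \theta\bW(k)\nabla H(\bz_g^k)$, and observe that since $\nabla H(\bz_g^k)\in\image\bP\bB$ and all iterates stay in this subspace (as noted right after Algorithm~\ref{alg:adom_common_constraints}), I may work throughout on $\image\bP\bB$, where $H$ is $L_H$-smooth and $\mu_H$-strongly convex with the constants displayed before Theorem~\ref{thm:main}.

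The core step is to apply $L_H$-smoothness of $H$ along the displacement $\bz_f^{k+1}-\bz_g^k = -\theta\bW(k)\g H(\bz_g^k)$:
\begin{align*}
H(\bz_f^{k+1}) \le H(\bz_g^k) - \theta\angles{\g H(\bz_g^k), \bW(k)\g H(\bz_g^k)} + \frac{L_H\theta^2}{2}\sqn{\bW(k)\g H(\bz_g^k)}.
\end{align*}
Then I would bound the second-order term using $\sqn{\bW(k)g} = \angles{g,\bW(k)^2 g}\le \lmax\angles{g,\bW(k)g}$ (valid since $\bW(k)\succeq 0$ and $\bW(k)\preceq \lmax I$), which collapses the right-hand side to $H(\bz_g^k) - \theta\bigl(1 - \tfrac{L_H\theta\lmax}{2}\bigr)\angles{\g H(\bz_g^k),\bW(k)\g H(\bz_g^k)}$. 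Invoking the hypothesis $\theta \le \frac{1}{L_H\lmax}$ gives $1 - \tfrac{L_H\theta\lmax}{2}\ge \tfrac12$, so the bracket is at least $\tfrac{\theta}{2}$. Finally, since $\g H(\bz_g^k)$ lies in $\image\bP\bB\subseteq\image\bP$ and $\bW(k)$ restricted to $\image\bP$ has smallest eigenvalue at least $\lminp$ (this is where the kernel property of the gossip matrix and \eqref{eq:spectrum} enter — $\bW(k)$ agrees with identity on the $\bp$-block and with $W(k)\otimes I_d$ on the $\bs$-block, which is positive definite on $\cLp$), we get $\angles{\g H(\bz_g^k),\bW(k)\g H(\bz_g^k)} \ge \lminp\sqn{\g H(\bz_g^k)}_{\mP}$, yielding exactly \eqref{dual:eq:1}.

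The main obstacle, and the only place requiring genuine care, is the very last inequality: one must make precise that $\bW(k)\succeq \lminp\mP$ as operators on the relevant subspace, i.e. that the $\mP$-weighted norm is the correct object, rather than the Euclidean norm. This follows because $\mP$ is the orthogonal projector onto $\image\bP\bB$'s ambient space $\image\bP = \R^{\dim(\bb)}\times\cLp$, $\bW(k)$ leaves this space invariant, and on it $\bW(k)$ has no kernel with its smallest eigenvalue bounded below by $\lminp$ from \eqref{eq:spectrum}; hence $\angles{g,\bW(k)g}\ge\lminp\angles{g,\mP g} = \lminp\sqn{g}_{\mP}$ for all $g\in\image\bP$. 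Everything else is the routine smoothness-plus-step-size bookkeeping inherited verbatim from \cite{kovalev2021adom}, so I would state those lines tersely and reference that paper.
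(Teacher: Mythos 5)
Your proposal is correct and follows essentially the same route as the paper: $L_H$-smoothness along the step $-\theta\bW(k)\g H(\bz_g^k)$, the bound $\sqn{g}_{\mW^2(k)}\le\lmax\sqn{g}_{\mW(k)}$, the operator inequality $\bW(k)\succeq\lminp\mP$ on $\image\bP$, and the step-size condition $\theta\le\frac{1}{\LH\lmax}$. The only cosmetic difference is bookkeeping order — the paper splits $-\theta\sqn{\g H(\bz_g^k)}_{\mW(k)}$ into two halves and uses one half to absorb the quadratic term, whereas you factor the coefficient and lower-bound it by $\tfrac12$ — which yields the identical final estimate.
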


\begin{proof}
	We start with $\LH$-smoothness of $H$ on $\image \mP\mB$:
	\begin{align*}
	H(\bz_f^{k+1}) \leq H(\bz_g^k) + \<\g H(\bz_g^k), \bz_f^{k+1} - \bz_g^k> + \frac{\LH}{2}\sqn{\bz_f^{k+1} - \bz_g^k}.
	\end{align*}
	Using line~\ref{adom:line:zf} of Algorithm~\ref{alg:adom_common_constraints} together with \eqref{eq:spectrum} we get
	\begin{align*}
	H(\bz_f^{k+1}) &\leq H(\bz_g^k) - \theta \sqn{\g H(\bz_g^k)}_{\mW(k)} + \frac{\LH\theta^2}{2}\sqn{\g H(\bz_g^k)}_{\mW^2(k)}
	\\&\leq
	H(\bz_g^k) - \frac{\theta\lminp}{2}\sqn{\g H(\bz_g^k)}_{\mP} - \frac{\theta}{2}\sqn{\g H(\bz_g^k)}_{\mW(k)} + \frac{\LH\theta^2\lmax}{2}\sqn{\g H(\bz_g^k)}_{\mW(k)}
	\\&=
	H(\bz_g^k) - \frac{\theta\lminp}{2}\sqn{\g H(\bz_g^k)}_{\mP} +\frac{\theta}{2}\left(\theta\LH\lmax- 1\right)\sqn{\g H(\bz_g^k)}_{\mW(k)}.
	\end{align*}
	Using condition $\theta \leq \frac{1}{\LH\lmax}$ we get
	\begin{align*}
	H(\bz_f^{k+1})
	&\leq
	H(\bz_g^k) - \frac{\theta\lminp}{2}\sqn{\g H(\bz_g^k)}_{\mP}.
	\end{align*}
\end{proof}

\begin{lemma}\label{dual:lem:error}
	For $\sigma \leq \frac{1}{\lmax}$ we have the inequality 
	\begin{equation}\label{dual:eq:2}
	\begin{split}
	&\squeeze \sqn{\bm^k}_{\mP}
	\leq
	\left(1 - \frac{\sigma\lminp}{4}\right)\frac{4}{\sigma\lminp}\sqn{\bm^k}_{\mP}
	\\&\squeeze -
	\frac{4}{\sigma\lminp}\sqn{\bm^{k+1}}_\mP
	+
	\frac{8\eta^2}{(\sigma\lminp)^2}\sqn{\g H(\bz_g^k)}_\mP.
	\end{split}
	\end{equation}
\end{lemma}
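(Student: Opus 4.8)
The plan is to prove the bound in Lemma~\ref{dual:lem:error} by unrolling line~\ref{adom:line:m} of Algorithm~\ref{alg:adom_common_constraints} one step and carefully estimating the resulting quadratic form in the $\mP$-seminorm. First I would write $\bm^{k+1} = \bm^k - \eta\g H(\bz_g^k) - \Delta^k$ with $\Delta^k = \sigma\bW(k)(\bm^k - \eta\g H(\bz_g^k))$, so that setting $\bu^k := \bm^k - \eta\g H(\bz_g^k)$ gives the compact form $\bm^{k+1} = (\bI - \sigma\bW(k))\bu^k$. The key structural fact I would invoke is that all iterates live in $\image\mP\mB$ (as established in the paragraph after the algorithm), and on that subspace $\mW(k)$ and $\mP$ are simultaneously controlled by \eqref{eq:spectrum}: $\lminp\mP \preceq \mW(k) \preceq \lmax\mP$ when restricted to the relevant range. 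Consequently, for $\sigma \le 1/\lmax$ the operator $\bI - \sigma\bW(k)$ is a contraction on $\image\mP\mB$ in the $\mP$-seminorm, with $\sqn{(\bI - \sigma\bW(k))\bv}_\mP \le (1 - \sigma\lminp)\sqn{\bv}_\mP$ — this is the heart of the matter.

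Next I would expand $\sqn{\bm^{k+1}}_\mP = \sqn{(\bI - \sigma\bW(k))\bu^k}_\mP$ and, using the contraction estimate together with $\sqn{\bu^k}_\mP \le (1 + c)\sqn{\bm^k}_\mP + (1 + 1/c)\eta^2\sqn{\g H(\bz_g^k)}_\mP$ (Young's inequality for a free parameter $c > 0$, to be chosen proportional to $\sigma\lminp$), I would obtain an inequality of the form $\sqn{\bm^{k+1}}_\mP \le (1 - \sigma\lminp)(1+c)\sqn{\bm^k}_\mP + (1 - \sigma\lminp)(1 + 1/c)\eta^2\sqn{\g H(\bz_g^k)}_\mP$. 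Choosing $c \asymp \sigma\lminp/2$ makes $(1 - \sigma\lminp)(1 + c) \le 1 - \sigma\lminp/2$ (roughly), and then rearranging to isolate $\sqn{\bm^k}_\mP$ on the left — i.e. multiplying through by $\tfrac{4}{\sigma\lminp}$ and moving $\tfrac{4}{\sigma\lminp}\sqn{\bm^{k+1}}_\mP$ to the right — produces precisely the claimed inequality, with the factor $(1 - \sigma\lminp/4)$ absorbing the slack and $\tfrac{8\eta^2}{(\sigma\lminp)^2}$ coming from $\tfrac{4}{\sigma\lminp}\cdot(1+1/c)\eta^2$ with $1/c \asymp 2/(\sigma\lminp)$.

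The main obstacle I anticipate is bookkeeping the constants so that the specific coefficients $\tfrac{4}{\sigma\lminp}$, $(1 - \tfrac{\sigma\lminp}{4})$ and $\tfrac{8\eta^2}{(\sigma\lminp)^2}$ come out exactly as stated rather than merely up to absolute constants; this forces a careful (non-optimal but clean) choice of the Young parameter $c$ and a slightly lossy application of the contraction bound, mirroring the corresponding step in the original ADOM analysis of \cite{kovalev2021adom}. A secondary technical point to be careful about is that $\mW(k)$ acts as claimed only on $\image\mP\mB$ — one must check that $\bu^k$ (equivalently $\bm^k$ and $\g H(\bz_g^k)$) indeed lies in this subspace, which is exactly the content of the remark following Algorithm~\ref{alg:adom_common_constraints}, so that the spectral sandwich \eqref{eq:spectrum} is applicable and $\mP$ behaves as the relevant identity there. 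Once these two points are handled, the rest is the routine expansion and regrouping sketched above.
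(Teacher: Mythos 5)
Your proposal is correct and follows essentially the same route as the paper's proof: rewrite $\mP\bm^{k+1}=(\mP-\sigma\bW(k))(\bm^k-\eta\g H(\bz_g^k))$, use $\mP\bW(k)=\bW(k)\mP=\bW(k)$ together with the spectral bounds \eqref{eq:spectrum} and $\sigma\le 1/\lmax$ to get the contraction $\sqn{\bm^{k+1}}_\mP\le(1-\sigma\lminp)\sqn{\bm^k-\eta\g H(\bz_g^k)}_\mP$, then apply Young's inequality with parameter $\tfrac{\sigma\lminp}{2(1-\sigma\lminp)}$ and rearrange. The constants you sketch ($1-\sigma\lminp/2$ for the $\bm^k$ term and $\tfrac{2\eta^2}{\sigma\lminp}$ for the gradient term before multiplying by $\tfrac{4}{\sigma\lminp}$) match the paper's exactly.
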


\begin{proof}
	Using $\mP = \mP^2$ and $\mP \mW(k) = \mW(k) \mP = \mW(k)$
 together with lines~\ref{adom:line:delta} and~\ref{adom:line:m} of Algorithm~\ref{alg:adom_common_constraints} we obtain
	\begin{align*}
		\sqn{\bm^{k+1}}_\mP
		&=
		\sqn{\bm^{k} - \eta \g H(\bz_g^k) - \Delta^k}_\mP
		\\&=
		\sqn{(\mP -\sigma\mW(k))(\bm^{k} - \eta \g H(\bz_g^k))}
		\\&=
		\sqn{\bm^{k} - \eta \g H(\bz_g^k)}_\mP
		-
		2\sigma\sqn{\bm^{k} - \eta \g H(\bz_g^k)}_{\mW(k)}
		+
		\sigma^2\sqn{\bm^{k} - \eta \g H(\bz_g^k)}_{\mW^2(k)}.
	\end{align*}
	Using \eqref{eq:spectrum} we obtain
	\begin{align*}
	\sqn{\bm^{k+1}}_\mP
	&\leq
	\sqn{\bm^{k} - \eta \g H(\bz_g^k)}_\mP
	-
	\sigma\lminp\sqn{\bm^{k} - \eta \g H(\bz_g^k)}_{\mP}
	\\&-
	\sigma\sqn{\bm^{k} - \eta \g H(\bz_g^k)}_{\mW(k)}
	+
	\sigma^2\lmax\sqn{\bm^{k} - \eta \g H(\bz_g^k)}_{\mW(k)}
	\\&=
	\sqn{\bm^{k} - \eta \g H(\bz_g^k)}_\mP
	-
	\sigma\lminp\sqn{\bm^{k} - \eta \g H(\bz_g^k)}_{\mP}
	\\&+
	\sigma(\sigma\lmax - 1)\sqn{\bm^{k} - \eta \g H(\bz_g^k)}_{\mW(k)}.
	\end{align*}
	Using condition $\sigma \leq\frac{1}{\lmax}$ we get
	\begin{align*}
	\sqn{\bm^{k+1}}_\mP
	&\leq
	(1-\sigma\lminp)\sqn{\bm^{k} - \eta \g H(\bz_g^k)}_\mP.
	\end{align*}
	Using Young's inequality we get
	\begin{align*}
	\sqn{\bm^{k+1}}_\mP
	&\leq
	(1-\sigma\lminp)\left(
		\left(1 + \frac{\sigma\lminp}{2(1-\sigma\lminp)}\right)\sqn{\bm^{k}}_\mP
		+
			\left(1 + \frac{2(1-\sigma\lminp)}{\sigma\lminp}\right)\sqn{\eta \g H(\bz_g^k)}_\mP
	\right)
	\\&=
	\left(1 - \frac{\sigma\lminp}{2}\right)\sqn{\bm^k}_{\mP}
	+
	\eta^2\frac{(1-\sigma\lminp)(2-\sigma\lminp)}{\sigma\lminp}\sqn{\g H(\bz_g^k)}_\mP
	\\&\leq
	\left(1 - \frac{\sigma\lminp}{2}\right)\sqn{\bm^k}_{\mP}
	+
	\frac{2\eta^2}{\sigma\lminp}\sqn{\g H(\bz_g^k)}_\mP.
	\end{align*}
	Rearranging concludes the proof.
\end{proof}

\begin{lemma}\label{dual:lem:main}
	Let
	\begin{equation}\label{dual:alpha}
	\alpha = \frac{\muH}{2},
	\end{equation}
	\begin{equation}
	\eta = \frac{2\lminp}{7\lmax\sqrt{\muH \LH}},\label{dual:eta}
	\end{equation}
	\begin{equation}
		\theta = \frac{1}{\LH\lmax},\label{dual:theta}
	\end{equation}
	\begin{equation}
	\sigma = \frac{1}{\lmax},\label{dual:sigma}
	\end{equation}
	\begin{equation}
	\tau = \frac{\lminp}{7\lmax}\sqrt{\frac{\muH}{\LH}}.\label{dual:tau}
	\end{equation}
Define the Lyapunov function
	\begin{equation}\label{dual:psi}
		\Psi^k \defeq \sqn{\hat{\bz}^k - \bz^*} + \frac{2\eta(1-\eta\alpha)}{\tau}(F^*(\bz_f^k) - F^*(\bz^*) )+6\sqn{\bm^k}_{\mP},
	\end{equation}
	where $\hat{\bz}^k$ is defined as
	\begin{equation}\label{dual:zhat}
		\hat{\bz}^k = \bz^k + \mP \bm^k.
	\end{equation}
	Then the following inequality holds:
	\begin{equation}\label{dual:eq:recurrence}
		\Psi^{k+1} \leq \left(1-\frac{\lminp}{7\lmax}\sqrt{\frac{\muH}{\LH}}\right)\Psi^k.
	\end{equation}
\end{lemma}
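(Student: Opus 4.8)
The plan is to reproduce the Lyapunov analysis of ADOM \cite{kovalev2021adom} essentially line by line, with $W\otimes I_d$ replaced by $\bW(k)$ and with $\mP$ playing the role of the consensus projector; recall that $\mP=\mP^2$, $\mP\bW(k)=\bW(k)\mP=\bW(k)$, and that the dual optimum $\bz^*\in\image\mP$ satisfies $\mP\g H(\bz^*)=0$. The structural key is that the gossip increment $\Delta^k=\sigma\bW(k)(\bm^k-\eta\g H(\bz_g^k))$ drops out of the ``virtual iterate'' $\hat\bz^k=\bz^k+\mP\bm^k$: since $\Delta^k\in\image\bW(k)\subseteq\image\mP$ we have $\mP\Delta^k=\Delta^k$, so adding lines~\ref{adom:line:m} and~\ref{adom:line:z} of Algorithm~\ref{alg:adom_common_constraints} gives the $\bW(k)$-free recursion
\begin{equation*}
\hat\bz^{k+1}=\hat\bz^k+\eta\alpha(\bz_g^k-\bz^k)-\eta\mP\g H(\bz_g^k)=\hat\bz^k+\eta\alpha(\bz_g^k-\bz^k)-\eta\mP\bigl(\g H(\bz_g^k)-\g H(\bz^*)\bigr).
\end{equation*}
First I would establish this identity, which is precisely what turns the rest into an ordinary accelerated-gradient estimate; the whole interplay with the time-varying $\bW(k)$ has then been shunted into the $\sqn{\bm^k}_\mP$ term, controlled separately by Lemma~\ref{dual:lem:error}.

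Next, I would expand $\sqn{\hat\bz^{k+1}-\bz^*}$ from the displayed identity. The inner product $\angles{\hat\bz^k-\bz^*,\bz_g^k-\bz^k}$ is rewritten via line~\ref{adom:line:zg}, which gives $\bz_g^k-\bz^k=\tfrac{1-\tau}{\tau}(\bz_f^k-\bz_g^k)$, together with $\hat\bz^k-\bz^*=(\bz^k-\bz^*)+\mP\bm^k$; convexity of $H$ turns the $\bz^k$-part into the function gap $H(\bz_f^k)-H(\bz^*)$ feeding the second term of $\Psi^k$, while the $\mP\bm^k$-part is absorbed by Young's inequality into $\sqn{\bm^k}_\mP$. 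The inner product $\angles{\hat\bz^k-\bz^*,\mP\g H(\bz_g^k)}$ is lower-bounded using $\muH$-strong convexity of $H$ on $\image\mP\mB$, producing $\tfrac{\muH}{2}\sqn{\bz_g^k-\bz^*}$ and $H(\bz_g^k)-H(\bz^*)$ (again with a Young split of the $\mP\bm^k$ part), and the squared remainder $\sqn{\eta\alpha(\bz_g^k-\bz^k)-\eta\mP\g H(\bz_g^k)}$ contributes only $O(\eta^2)$ multiples of $\sqn{\g H(\bz_g^k)}_\mP$ and $\sqn{\bz_g^k-\bz^k}$. Then Lemma~\ref{dual:lem:descent} is invoked to replace $H(\bz_g^k)$ by $H(\bz_f^{k+1})$, releasing the negative budget $-\tfrac{\theta\lminp}{2}\sqn{\g H(\bz_g^k)}_\mP$, and Lemma~\ref{dual:lem:error} supplies the recursion for the $6\sqn{\bm^k}_\mP$ term, which injects a further $O\bigl(\eta^2/(\sigma\lminp)\bigr)\sqn{\g H(\bz_g^k)}_\mP$ that the same budget must swallow.

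Finally, assemble the three estimates, collect the coefficients of $\sqn{\hat\bz^k-\bz^*}$, of the gap $H(\bz_f^k)-H(\bz^*)$, of $\sqn{\bm^k}_\mP$, and of the spurious quantities $\sqn{\g H(\bz_g^k)}_\mP$ and $\sqn{\bz_g^k-\bz^k}$, and substitute the parameter values of the statement. Here the useful identities are $\tau=\eta\alpha$ and $\eta\alpha\le\tfrac17<\tfrac12$, which make the weight $\tfrac{2\eta(1-\eta\alpha)}{\tau}$ positive and force the gap to contract at rate $1-\eta\alpha=1-\tau$; together with $\theta=1/(\LH\lmax)$ and $\sigma=1/\lmax$, which make the $\sqn{\g H(\bz_g^k)}_\mP$ and $\sqn{\bz_g^k-\bz^k}$ residuals nonpositive, and with $\alpha=\muH/2$, which matches the strong-convexity gain $\tfrac{\muH}{2}\sqn{\bz_g^k-\bz^*}$ against the $\eta\alpha\sqn{\hat\bz^k-\bz^*}$ produced in the expansion, one obtains that both the $\sqn{\hat\bz^k-\bz^*}$ and the $\sqn{\bm^k}_\mP$ coefficients contract at rate $1-\tau$, giving \eqref{dual:eq:recurrence}. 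The main obstacle is exactly this last accounting step: one must use \emph{all} the inequalities relating $\eta,\theta,\sigma,\tau$ to $\muH,\LH,\lminp,\lmax$ — not only strong convexity and smoothness but also the specific choices of the Young parameters — so that every residual term vanishes with the particular constant $7$ and the particular weights $2\eta(1-\eta\alpha)/\tau$ and $6$ built into $\Psi^k$. Each individual estimate is routine, but they are tightly coupled, which is why the computation is presented only as a transcription of the argument in \cite{kovalev2021adom}, valid verbatim because $H$ on $\image\mP\mB$ has the same strongly convex smooth structure as the objective analyzed there.
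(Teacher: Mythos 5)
Your proposal follows essentially the same route as the paper's proof: the key identity $\hat\bz^{k+1}=\hat\bz^k+\eta\alpha(\bz_g^k-\bz^k)-\eta\mP\g H(\bz_g^k)$ via $\mP\Delta^k=\Delta^k$, expansion of $\sqn{\hat\bz^{k+1}-\bz^*}$ with Young splits, convexity/strong convexity of $H$ on $\image\mP\mB$, Lemma~\ref{dual:lem:descent} to trade $H(\bz_g^k)$ for $H(\bz_f^{k+1})$, Lemma~\ref{dual:lem:error} for the $\sqn{\bm^k}_\mP$ term, and the final parameter accounting (your observation $\tau=\eta\alpha$ is consistent with the paper's use of $\eta\muH/2=\tau$). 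The plan is correct and matches the paper's argument, which is itself a transcription of the ADOM analysis with $\mP$ and $\bW(k)$ in place of the consensus objects.
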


\begin{proof}
	Using \eqref{dual:zhat} together with lines~\ref{adom:line:m} and~\ref{adom:line:z} of Algorithm~\ref{alg:adom_common_constraints}, we get
	\begin{align*}
	\hat{\bz}^{k+1} &= \bz^{k+1} + \mP \bm^{k+1}\\
	&= \bz^k + \eta\alpha(\bz_g^k - \bz^k) + \Delta^k + \mP( \bm^k - \eta\g H(\bz_g^k) - \Delta^k)
	\\&=
	\bz^k + \mP \bm^k + \eta\alpha(\bz_g^k - \bz^k) - \eta\mP \g H(\bz_g^k) + \Delta^k - \mP\Delta^k.
	\end{align*}
	From line~\ref{adom:line:delta} of Algorithm~\ref{alg:adom_common_constraints} and $\mP \mW(k) = \mW(k)$ it follows that $\mP\Delta^k = \Delta^k$, which implies
	\begin{align*}
	\hat{\bz}^{k+1} &= 
	\bz^k + \mP \bm^k + \eta\alpha(\bz_g^k - \bz^k) - \eta\mP \g H(\bz_g^k)
	\\&=
	\hat{\bz}^k + \eta\alpha(\bz_g^k - \bz^k) - \eta\mP \g H(\bz_g^k).
	\end{align*}
	Hence,
	\begin{align*}
		\sqn{\hat{\bz}^{k+1} - \bz^*}
		&=
		\sqn{\hat{\bz}^k - \bz^* + \eta\alpha(\bz_g^k - \bz^k) - \eta \mP \g H(\bz_g^k)}
		\\&=
		\sqn{(1 - \eta\alpha)(\hat{\bz}^k - \bz^* )+ \eta\alpha(\bz_g^k + \mP \bm^k - \bz^*)}
		+
		\eta^2\sqn{\g H(\bz_g^k)}_\mP
		\\&-
		2\eta\<\mP\g H(\bz_g^k), \bz^k  + \mP \bm^k- \bz^* + \eta\alpha(\bz_g^k - \bz^k)>
		\\&\leq.
  \end{align*}
  Using inequality 
  $\sqn{a + b} \leq (1+\gamma) \sqn{a} + (1 + \frac1\gamma) \sqn{b},~\gamma > 0$
  with $\gamma = \frac{\eta \alpha}{1- \eta \alpha}$ we get
	\begin{align*}
		\sqn{\hat{\bz}^{k+1} - \bz^*}
		&=
		(1-\eta\alpha)\sqn{\hat{\bz}^k - \bz^*} + \eta\alpha\sqn{\bz_g^k + \mP \bm^k - \bz^*}
		+
		\eta^2\sqn{\g H(\bz_g^k)}_\mP
		\\&-
		2\eta\<\g H(\bz_g^k),\mP(\bz_g^k - \bz^*)>
		+
		2\eta(1-\eta\alpha)\<\g H(\bz_g^k), \mP(\bz_g^k - \bz^k)>
		-
		2\eta\<\mP\g H(\bz_g^k),\bm^k>
		\\&\leq
		(1-\eta\alpha)\sqn{\hat{\bz}^k - \bz^*} + 2\eta\alpha\sqn{\bz_g^k - \bz^*}
		+
		2\eta\alpha\sqn{\bm^k}_\mP
		+
		\eta^2\sqn{\g H(\bz_g^k)}_\mP
		\\&-
		2\eta\<\g H(\bz_g^k),\mP(\bz_g^k - \bz^*)>
		+
		2\eta(1-\eta\alpha)\<\g H(\bz_g^k), \mP(\bz_g^k - \bz^k)>
		-
		2\eta\<\mP\g H(\bz_g^k),\bm^k>
	\end{align*}
	One can observe, that $\bz^k,\bz_g^k,\bz^* \in \image \mP \mB$. Hence,
	\begin{align*}
	\sqn{\hat{\bz}^{k+1} - \bz^*}
	&\leq
	(1-\eta\alpha)\sqn{\hat{\bz}^k - \bz^*} + 2\eta\alpha\sqn{\bz_g^k - \bz^*}
	+
	2\eta\alpha\sqn{\bm^k}_\mP
	+
	\eta^2\sqn{\g H(\bz_g^k)}_\mP
	\\&-
	2\eta\<\g H(\bz_g^k),\bz_g^k - \bz^*>
	+
	2\eta(1-\eta\alpha)\<\g H(\bz_g^k), \bz_g^k - \bz^k>
	-
	2\eta\<\mP\g H(\bz_g^k),\bm^k>.
	\end{align*}
	Using line~\ref{adom:line:zg} of Algorithm~\ref{alg:adom_common_constraints} we get
	\begin{align*}
	\sqn{\hat{\bz}^{k+1} - \bz^*}
	&\leq
	(1-\eta\alpha)\sqn{\hat{\bz}^k - \bz^*} + 2\eta\alpha\sqn{\bz_g^k - \bz^*}
	+
	2\eta\alpha\sqn{\bm^k}_\mP
	+
	\eta^2\sqn{\g H(\bz_g^k)}_\mP
	\\&-
	2\eta\<\g H(\bz_g^k),\bz_g^k - \bz^*>
	+
	2\eta(1-\eta\alpha)\frac{(1-\tau)}{\tau}\<\g H(\bz_g^k), \bz_f^k - \bz_g^k>
	-
	2\eta\<\mP\g H(\bz_g^k),\bm^k>.
	\end{align*}
	Using convexity and $\muH$-strong convexity of $H(\bz)$ on $\image \mP \mB$ we get
	\begin{align*}
	\sqn{\hat{\bz}^{k+1} - \bz^*}
	&\leq
	(1-\eta\alpha)\sqn{\hat{\bz}^k - \bz^*} + 2\eta\alpha\sqn{\bz_g^k - \bz^*}
	+
	2\eta\alpha\sqn{\bm^k}_\mP
	+
	\eta^2\sqn{\g H(\bz_g^k)}_\mP
	\\&-
	2\eta(H(\bz_g^k) - H(\bz^*)) - \eta\muH\sqn{\bz_g^k - \bz^*}
	\\&+
	2\eta(1-\eta\alpha)\frac{(1-\tau)}{\tau}(H(\bz_f^k) - H(\bz_g^k))
	-
	2\eta\<\mP\g H(\bz_g^k),\bm^k>
	\\&=
	(1-\eta\alpha)\sqn{\hat{\bz}^k - \bz^*}
	+
	\left(2\eta\alpha - \eta\muH\right)\sqn{\bz_g^k - \bz^*}
	+
	\eta^2\sqn{\g H(\bz_g^k)}_\mP
	\\&-
	2\eta(H(\bz_g^k) - H(\bz^*))
	+
	2\eta(1-\eta\alpha)\frac{(1-\tau)}{\tau}(H(\bz_f^k) - H(\bz_g^k))
	\\&-
	2\eta\<\mP\g H(\bz_g^k),\bm^k>
	+
	2\eta\alpha\sqn{\bm^k}_\mP.
	\end{align*}
	Using $\alpha$ defined by \eqref{dual:alpha} we get
	\begin{align*}
	\sqn{\hat{\bz}^{k+1} - \bz^*}
	&\leq
	\left(1-\frac{\eta \muH}{2}\right)\sqn{\hat{\bz}^k - \bz^*}
	+
	\eta^2\sqn{\g H(\bz_g^k)}_\mP
	\\&-
	2\eta(H(\bz_g^k) - H(\bz^*))
	+
	2\eta(1-\eta\alpha)\frac{(1-\tau)}{\tau}(H(\bz_f^k) - H(\bz_g^k))
	\\&-
	2\eta\<\mP\g H(\bz_g^k),\bm^k>
	+
	2\eta\alpha\sqn{\bm^k}_\mP.
	\end{align*}
	Since $H(\bz_g^k) \geq H(\bz^*)$, we get
	\begin{align*}
	\sqn{\hat{\bz}^{k+1} - \bz^*}
	&\leq
	\left(1-\frac{\eta\muH}{2}\right)\sqn{\hat{\bz}^k - \bz^*}
	+
	\eta^2\sqn{\g H(\bz_g^k)}_\mP
	\\&-
	2\eta(1-\eta\alpha)(H(\bz_g^k) - H(\bz^*))
	+
	2\eta(1-\eta\alpha)\frac{(1-\tau)}{\tau}(H(\bz_f^k) - H(\bz_g^k))
	\\&-
	2\eta\<\mP\g H(\bz_g^k),\bm^k>
	+
	2\eta\alpha\sqn{\bm^k}_\mP
	\\&=
	\left(1-\frac{\eta\muH}{2}\right)\sqn{\hat{\bz}^k - \bz^*}
	+
	\eta^2\sqn{\g H(\bz_g^k)}_\mP
	\\&+
	2\eta(1-\eta\alpha)\left(
	\frac{(1-\tau)}{\tau}H(\bz_f^k) + H(\bz^*) - \frac{1}{\tau}H(\bz_g^k)
	\right)
	\\&-
	2\eta\<\mP\g H(\bz_g^k),\bm^k>
	+
	2\eta\alpha\sqn{\bm^k}_\mP.
	\end{align*}
	Using \eqref{dual:eq:1} and $\theta$ defined by \eqref{dual:theta} we get
	\begin{align*}
	\sqn{\hat{\bz}^{k+1} - \bz^*}
	&\leq
	\left(1-\frac{\eta\muH}{2}\right)\sqn{\hat{\bz}^k - \bz^*}
	+
	\left(\eta^2-\frac{(1-\eta\alpha)\eta\lminp}{\tau\lmax\LH}\right)\sqn{\g H(\bz_g^k)}_\mP
	\\&+
	(1-\tau)\frac{2\eta(1-\eta\alpha)}{\tau}(H(\bz_f^k) - H(\bz^*) )
	-
	\frac{2\eta(1-\eta\alpha)}{\tau}(H(\bz_f^{k+1}) - H(\bz^*) )
	\\&-
	2\eta\<\mP\g H(\bz_g^k),\bm^k>
	+
	2\eta\alpha\sqn{\bm^k}_\mP.
	\end{align*}
	Using Young's inequality we get
	\begin{align*}
	\sqn{\hat{\bz}^{k+1} - \bz^*}
	&\leq
	\left(1-\frac{\eta\muH}{2}\right)\sqn{\hat{\bz}^k - \bz^*}
	+
	\left(\eta^2-\frac{(1-\eta\alpha)\eta\lminp}{\tau\lmax\LH}\right)\sqn{\g H(\bz_g^k)}_\mP
	\\&+
	(1-\tau)\frac{2\eta(1-\eta\alpha)}{\tau}(H(\bz_f^k) - H(\bz^*) )
	-
	\frac{2\eta(1-\eta\alpha)}{\tau}(H(\bz_f^{k+1}) - H(\bz^*) )
	\\&+
	\frac{\eta^2\lmax}{\lminp}\sqn{\g H(\bz_g^k)}_\mP + \frac{\lminp}{\lmax}\sqn{\bm^k}_\mP
	+
	2\eta\alpha\sqn{\bm^k}_\mP
	\\&=
	\left(1-\frac{\eta \muH}{2}\right)\sqn{\hat{\bz}^k - \bz^*}
	+
	\left(\eta^2+\frac{\eta^2\lmax}{\lminp}-\frac{(1-\eta\alpha)\eta\lminp}{\tau\lmax\LH}\right)\sqn{\g H(\bz_g^k)}_\mP
	\\&+
	(1-\tau)\frac{2\eta(1-\eta\alpha)}{\tau}(H(\bz_f^k) - H(\bz^*) )
	-
	\frac{2\eta(1-\eta\alpha)}{\tau}(H(\bz_f^{k+1}) - H(\bz^*) )
	\\&+
	\left(\frac{\lminp}{\lmax}+2\eta\alpha\right)\sqn{\bm^k}_\mP.
	\end{align*}
	Using \eqref{dual:eta} and \eqref{dual:alpha}, that imply $\eta\alpha\leq \frac{\lminp}{4\lmax}$, we obtain
	\begin{align*}
	\sqn{\hat{\bz}^{k+1} - \bz^*}
	&\leq
	\left(1-\frac{\eta \muH}{2}\right)\sqn{\hat{\bz}^k - \bz^*}
	+
	\left(\eta^2+\frac{\eta^2\lmax}{\lminp}-\frac{3\eta\lminp}{4\tau\lmax\LH}\right)\sqn{\g H(\bz_g^k)}_\mP
	\\&+
	(1-\tau)\frac{2\eta(1-\eta\alpha)}{\tau}(H(\bz_f^k) - H(\bz^*) )
	-
	\frac{2\eta(1-\eta\alpha)}{\tau}(H(\bz_f^{k+1}) - H(\bz^*) )
	\\&+
	\frac{3\lminp}{2\lmax}\sqn{\bm^k}_\mP.
	\end{align*}
	Using \eqref{dual:eq:2} and $\sigma$ defined by \eqref{dual:sigma} we get
	\begin{align*}
	\sqn{\hat{\bz}^{k+1} - \bz^*}
	&\leq
	\left(1-\frac{\eta\muH}{2}\right)\sqn{\hat{\bz}^k - \bz^*}
	+
	\left(\eta^2+\frac{\eta^2\lmax}{\lminp}-\frac{3\eta\lminp}{4\tau\lmax\LH}\right)\sqn{\g H(\bz_g^k)}_\mP
	\\&+
	(1-\tau)\frac{2\eta(1-\eta\alpha)}{\tau}(H(\bz_f^k) - H(\bz^*) )
	-
	\frac{2\eta(1-\eta\alpha)}{\tau}(H(\bz_f^{k+1}) - H(\bz^*) )
	\\&+
	\left(1 - \frac{\lminp}{4\lmax}\right)6\sqn{\bm^k}_{\mP}
	-
	6\sqn{\bm^{k+1}}_\mP
	+
	\frac{12\eta^2\lmax}{\lminp}\sqn{\g H(\bz_g^k)}_\mP
	\\&\leq
	\left(1-\frac{\eta\muH}{2}\right)\sqn{\hat{\bz}^k - \bz^*}
	+
	\left(\frac{14\eta^2\lmax}{\lminp}-\frac{3\eta\lminp}{4\tau\lmax\LH}\right)\sqn{\g H(\bz_g^k)}_\mP
	\\&+
	(1-\tau)\frac{2\eta(1-\eta\alpha)}{\tau}(H(\bz_f^k) - H(\bz^*) )
	-
	\frac{2\eta(1-\eta\alpha)}{\tau}(H(\bz_f^{k+1}) - H(\bz^*) )
	\\&+
	\left(1 - \frac{\lminp}{4\lmax}\right)6\sqn{\bm^k}_{\mP}
	-
	6\sqn{\bm^{k+1}}_\mP.
	\end{align*}
	Using $\eta$ defined by \eqref{dual:eta} and $\tau$ defined by \eqref{dual:tau} we get
	\begin{align*}
	\sqn{\hat{\bz}^{k+1} - \bz^*}
	&\leq
	\left(1-\frac{\lminp}{7\lmax}\sqrt{\frac{\muH}{\LH}}\right)\sqn{\hat{\bz}^k - \bz^*}
	+
	\left(1 - \frac{\lminp}{4\lmax}\right)6\sqn{\bm^k}_{\mP}
	-
	6\sqn{\bm^{k+1}}_\mP
	\\&+
	\left(1-\frac{\lminp}{7\lmax}\sqrt{\frac{\muH}{\LH}}\right)\frac{2\eta(1-\eta\alpha)}{\tau}(H(\bz_f^k) - H(\bz^*) )
	-
	\frac{2\eta(1-\eta\alpha)}{\tau}(H(\bz_f^{k+1}) - H(\bz^*) )
	\\&\leq
	\left(1-\frac{\lminp}{7\lmax}\sqrt{\frac{\muH}{\LH}}\right)
	\left(\sqn{\hat{\bz}^k - \bz^*} + \frac{2\eta(1-\eta\alpha)}{\tau}(H(\bz_f^k) - H(\bz^*) )+6\sqn{\bm^k}_{\mP}\right)
	\\&-
	\frac{2\eta(1-\eta\alpha)}{\tau}(H(\bz_f^{k+1}) - H(\bz^*) )
	-
	6\sqn{\bm^{k+1}}_\mP.
	\end{align*}
	Rearranging and using \eqref{dual:psi} concludes the proof.
\end{proof}

\subsection{Proof of Theorem~\ref{thm:main}}

\begin{proof}
    From derivation of the reformulated problem and Demyanov-Danskin theorem it follows that $\g F^*(\mB^\top \bz^*) = \bx^*$.
    Therefore $\g H(\bz^*) = (0_d, \bx^*)^\top$.
	Using $\LH$-smoothness of $H$ on $\image \mP \mB$ we get
	\begin{align*}
		\sqn{\g F^*(\mB^\top \bz_g^k) - \bx^*}
        &=
		\sqn{\g F^*(\mB^\top \bz_g^k) - F^*(\mB^\top \bz^*)}
		\leq
		\sqn{\g H(\bz_g^k) - \g H(\bz^*)}
		\leq
		\LH^2 \sqn{\bz_g^k - \bz^*}.
	\end{align*}
	Using line~\ref{adom:line:zg} of Algorithm~\ref{alg:adom_common_constraints} 
  and inequality 
  $\sqn{a + b} \leq (1+\gamma) \sqn{a} + (1 + \frac1\gamma) \sqn{b},~\gamma > 0$
  with $\gamma = \frac{1}{\tau} - 1$ we get
 we get
	\begin{align*}
		\sqn{\g F^*(\mB^\top \bz_g^k) - \bx^*}
		&\leq\tau\LH^2\sqn{\bz^k - \bz^*} + (1-\tau)\LH^2\sqn{\bz_f^k - \bz^*}.
	\end{align*}
	Using $\muH$-strong convexity of $H$ on $\image \mP \mB$ we get
	\begin{align*}
    \sqn{\g F^*(\mB^\top \bz_g^k) - \bx^*}
	&\leq\tau\LH^2\sqn{\bz^k - \bz^*} 
    +
    \frac{2(1-\tau) \LH^2}{\muH}(H(\bz_f^k) - H(\bz^*)).
	\end{align*}
	Using \eqref{dual:zhat} we get
	\begin{align*}
    &\sqn{\g F^*(\mB^\top \bz_g^k) - \bx^*}
    \\&\leq
    2\tau\LH^2\sqn{\hat{\bz}^k - \bz^*}
    +
    2\tau\LH^2\sqn{\bm^k}_\mP
    +
    \frac{2(1-\tau) \LH^2}{\muH}(H(\bz_f^k) - H(\bz^*))
    \\&=
    2\tau\LH^2\sqn{\hat{\bz}^k - \bz^*}
    +
    \frac{\tau(1-\tau)\LH^2}{\eta(1-\eta\alpha)\muH}\frac{2\eta(1-\eta\alpha)}{\tau}(H(\bz_f^k) - H(\bz^*))
    +
    \frac{\tau\LH^2}{3}6\sqn{\bm^k}_\mP.
    \\&\leq
    \max\left\{2\tau\LH^2,\frac{\tau(1-\tau)\LH^2}{\eta(1-\eta\alpha)\muH}, \frac{\tau \LH^2}{3}\right\}
    \left(\sqn{\hat{\bz}^k - \bz^*} + \frac{2\eta(1-\eta\alpha)}{\tau}(F^*(\bz_f^k) - F^*(\bz^*) )+6\sqn{\bm^k}_{\mP}\right)
    \\&=
    \max\left\{2\tau\LH^2,\frac{\tau(1-\tau)\LH^2}{\eta(1-\eta\alpha)\muH}\right\}
    \left(\sqn{\hat{\bz}^k - \bz^*} + \frac{2\eta(1-\eta\alpha)}{\tau}(F^*(\bz_f^k) - F^*(\bz^*) )+6\sqn{\bm^k}_{\mP}\right).
	\end{align*}
	Using the definition of $\Psi^k$ \eqref{dual:psi} and denoting 
    $C = \Psi^0	\max\left\{2\tau\LH^2,\frac{\tau(1-\tau)\LH^2}{\eta(1-\eta\alpha)\muH} \right\}$
    we get
	\begin{align*}
        \sqn{\g F^*(\mB^\top \bz_g^k) - \bx^*}
		 \leq \frac{C}{\Psi^0}\Psi^k.
	\end{align*}
	   Applying Lemma~\ref{dual:lem:main} concludes the proof. 
\end{proof}

\bibliography{references}


\begin{thebibliography}{28}
\ifx \bisbn   \undefined \def \bisbn  #1{ISBN #1}\fi
\ifx \binits  \undefined \def \binits#1{#1}\fi
\ifx \bauthor  \undefined \def \bauthor#1{#1}\fi
\ifx \batitle  \undefined \def \batitle#1{#1}\fi
\ifx \bjtitle  \undefined \def \bjtitle#1{#1}\fi
\ifx \bvolume  \undefined \def \bvolume#1{\textbf{#1}}\fi
\ifx \byear  \undefined \def \byear#1{#1}\fi
\ifx \bissue  \undefined \def \bissue#1{#1}\fi
\ifx \bfpage  \undefined \def \bfpage#1{#1}\fi
\ifx \blpage  \undefined \def \blpage #1{#1}\fi
\ifx \burl  \undefined \def \burl#1{\textsf{#1}}\fi
\ifx \doiurl  \undefined \def \doiurl#1{\url{https://doi.org/#1}}\fi
\ifx \betal  \undefined \def \betal{\textit{et al.}}\fi
\ifx \binstitute  \undefined \def \binstitute#1{#1}\fi
\ifx \binstitutionaled  \undefined \def \binstitutionaled#1{#1}\fi
\ifx \bctitle  \undefined \def \bctitle#1{#1}\fi
\ifx \beditor  \undefined \def \beditor#1{#1}\fi
\ifx \bpublisher  \undefined \def \bpublisher#1{#1}\fi
\ifx \bbtitle  \undefined \def \bbtitle#1{#1}\fi
\ifx \bedition  \undefined \def \bedition#1{#1}\fi
\ifx \bseriesno  \undefined \def \bseriesno#1{#1}\fi
\ifx \blocation  \undefined \def \blocation#1{#1}\fi
\ifx \bsertitle  \undefined \def \bsertitle#1{#1}\fi
\ifx \bsnm \undefined \def \bsnm#1{#1}\fi
\ifx \bsuffix \undefined \def \bsuffix#1{#1}\fi
\ifx \bparticle \undefined \def \bparticle#1{#1}\fi
\ifx \barticle \undefined \def \barticle#1{#1}\fi
\bibcommenthead
\ifx \bconfdate \undefined \def \bconfdate #1{#1}\fi
\ifx \botherref \undefined \def \botherref #1{#1}\fi
\ifx \url \undefined \def \url#1{\textsf{#1}}\fi
\ifx \bchapter \undefined \def \bchapter#1{#1}\fi
\ifx \bbook \undefined \def \bbook#1{#1}\fi
\ifx \bcomment \undefined \def \bcomment#1{#1}\fi
\ifx \oauthor \undefined \def \oauthor#1{#1}\fi
\ifx \citeauthoryear \undefined \def \citeauthoryear#1{#1}\fi
\ifx \endbibitem  \undefined \def \endbibitem {}\fi
\ifx \bconflocation  \undefined \def \bconflocation#1{#1}\fi
\ifx \arxivurl  \undefined \def \arxivurl#1{\textsf{#1}}\fi
\csname PreBibitemsHook\endcsname

\bibitem[\protect\citeauthoryear{Huang et~al.}{2019}]{huang2019gpipe}
\begin{botherref}
\oauthor{\bsnm{Huang}, \binits{Y.}},
\oauthor{\bsnm{Cheng}, \binits{Y.}},
\oauthor{\bsnm{Bapna}, \binits{A.}},
\oauthor{\bsnm{Firat}, \binits{O.}},
\oauthor{\bsnm{Chen}, \binits{D.}},
\oauthor{\bsnm{Chen}, \binits{M.}},
\oauthor{\bsnm{Lee}, \binits{H.}},
\oauthor{\bsnm{Ngiam}, \binits{J.}},
\oauthor{\bsnm{Le}, \binits{Q.V.}},
\oauthor{\bsnm{Wu}, \binits{Y.}}, et al.:
Gpipe: Efficient training of giant neural networks using pipeline parallelism.
Advances in neural information processing systems
\textbf{32}
(2019)
\end{botherref}
\endbibitem

\bibitem[\protect\citeauthoryear{Lian et~al.}{2017}]{lian2017can}
\begin{bchapter}
\bauthor{\bsnm{Lian}, \binits{X.}},
\bauthor{\bsnm{Zhang}, \binits{C.}},
\bauthor{\bsnm{Zhang}, \binits{H.}},
\bauthor{\bsnm{Hsieh}, \binits{C.-J.}},
\bauthor{\bsnm{Zhang}, \binits{W.}},
\bauthor{\bsnm{Liu}, \binits{J.}}:
\bctitle{Can decentralized algorithms outperform centralized algorithms? a case
  study for decentralized parallel stochastic gradient descent}.
In: \bbtitle{Advances in Neural Information Processing Systems},
pp. \bfpage{5330}--\blpage{5340}
(\byear{2017})
\end{bchapter}
\endbibitem

\bibitem[\protect\citeauthoryear{Hu et~al.}{2021}]{hu2021vgai}
\begin{bchapter}
\bauthor{\bsnm{Hu}, \binits{T.-K.}},
\bauthor{\bsnm{Gama}, \binits{F.}},
\bauthor{\bsnm{Chen}, \binits{T.}},
\bauthor{\bsnm{Wang}, \binits{Z.}},
\bauthor{\bsnm{Ribeiro}, \binits{A.}},
\bauthor{\bsnm{Sadler}, \binits{B.M.}}:
\bctitle{Vgai: End-to-end learning of vision-based decentralized controllers
  for robot swarms}.
In: \bbtitle{ICASSP 2021-2021 IEEE International Conference on Acoustics,
  Speech and Signal Processing (ICASSP)},
pp. \bfpage{4900}--\blpage{4904}
(\byear{2021}).
\bcomment{IEEE}
\end{bchapter}
\endbibitem

\bibitem[\protect\citeauthoryear{Zhu et~al.}{2023}]{zhu2023swarm}
\begin{bchapter}
\bauthor{\bsnm{Zhu}, \binits{F.}},
\bauthor{\bsnm{Ren}, \binits{Y.}},
\bauthor{\bsnm{Kong}, \binits{F.}},
\bauthor{\bsnm{Wu}, \binits{H.}},
\bauthor{\bsnm{Liang}, \binits{S.}},
\bauthor{\bsnm{Chen}, \binits{N.}},
\bauthor{\bsnm{Xu}, \binits{W.}},
\bauthor{\bsnm{Zhang}, \binits{F.}}:
\bctitle{Swarm-lio: Decentralized swarm lidar-inertial odometry}.
In: \bbtitle{2023 IEEE International Conference on Robotics and Automation
  (ICRA)},
pp. \bfpage{3254}--\blpage{3260}
(\byear{2023}).
\bcomment{IEEE}
\end{bchapter}
\endbibitem

\bibitem[\protect\citeauthoryear{Li et~al.}{2023}]{li2023optimal}
\begin{barticle}
\bauthor{\bsnm{Li}, \binits{W.}},
\bauthor{\bsnm{Tang}, \binits{R.}},
\bauthor{\bsnm{Wang}, \binits{S.}},
\bauthor{\bsnm{Zheng}, \binits{Z.}}:
\batitle{An optimal design method for communication topology of wireless sensor
  networks to implement fully distributed optimal control in iot-enabled smart
  buildings}.
\bjtitle{Applied Energy}
\bvolume{349},
\bfpage{121539}
(\byear{2023})
\end{barticle}
\endbibitem

\bibitem[\protect\citeauthoryear{Molzahn et~al.}{2017}]{molzahn2017survey}
\begin{barticle}
\bauthor{\bsnm{Molzahn}, \binits{D.K.}},
\bauthor{\bsnm{D{\"o}rfler}, \binits{F.}},
\bauthor{\bsnm{Sandberg}, \binits{H.}},
\bauthor{\bsnm{Low}, \binits{S.H.}},
\bauthor{\bsnm{Chakrabarti}, \binits{S.}},
\bauthor{\bsnm{Baldick}, \binits{R.}},
\bauthor{\bsnm{Lavaei}, \binits{J.}}:
\batitle{A survey of distributed optimization and control algorithms for
  electric power systems}.
\bjtitle{IEEE Transactions on Smart Grid}
\bvolume{8}(\bissue{6}),
\bfpage{2941}--\blpage{2962}
(\byear{2017})
\end{barticle}
\endbibitem

\bibitem[\protect\citeauthoryear{Silva-Rodriguez and
  Li}{2023}]{silva2023privacy}
\begin{botherref}
\oauthor{\bsnm{Silva-Rodriguez}, \binits{J.}},
\oauthor{\bsnm{Li}, \binits{X.}}:
Privacy-preserving decentralized energy management for networked microgrids via
  objective-based admm.
arXiv preprint arXiv:2304.03649
(2023)
\end{botherref}
\endbibitem

\bibitem[\protect\citeauthoryear{Wang and Hu}{2022}]{wang2022distributed}
\begin{botherref}
\oauthor{\bsnm{Wang}, \binits{J.}},
\oauthor{\bsnm{Hu}, \binits{G.}}:
Distributed optimization with coupling constraints in multi-cluster networks
  based on dual proximal gradient method.
arXiv preprint arXiv:2203.00956
(2022)
\end{botherref}
\endbibitem

\bibitem[\protect\citeauthoryear{Nedic et~al.}{2010}]{nedic2010constrained}
\begin{barticle}
\bauthor{\bsnm{Nedic}, \binits{A.}},
\bauthor{\bsnm{Ozdaglar}, \binits{A.}},
\bauthor{\bsnm{Parrilo}, \binits{P.A.}}:
\batitle{Constrained consensus and optimization in multi-agent networks}.
\bjtitle{IEEE Transactions on Automatic Control}
\bvolume{55}(\bissue{4}),
\bfpage{922}--\blpage{938}
(\byear{2010})
\end{barticle}
\endbibitem

\bibitem[\protect\citeauthoryear{Necoara et~al.}{2011}]{necoara2011parallel}
\begin{barticle}
\bauthor{\bsnm{Necoara}, \binits{I.}},
\bauthor{\bsnm{Nedelcu}, \binits{V.}},
\bauthor{\bsnm{Dumitrache}, \binits{I.}}:
\batitle{Parallel and distributed optimization methods for estimation and
  control in networks}.
\bjtitle{Journal of Process Control}
\bvolume{21}(\bissue{5}),
\bfpage{756}--\blpage{766}
(\byear{2011})
\end{barticle}
\endbibitem

\bibitem[\protect\citeauthoryear{Scutari et~al.}{2016}]{scutari2016parallel}
\begin{barticle}
\bauthor{\bsnm{Scutari}, \binits{G.}},
\bauthor{\bsnm{Facchinei}, \binits{F.}},
\bauthor{\bsnm{Lampariello}, \binits{L.}}:
\batitle{Parallel and distributed methods for constrained nonconvex
  optimization—part i: Theory}.
\bjtitle{IEEE Transactions on Signal Processing}
\bvolume{65}(\bissue{8}),
\bfpage{1929}--\blpage{1944}
(\byear{2016})
\end{barticle}
\endbibitem

\bibitem[\protect\citeauthoryear{Scutari and
  Sun}{2019}]{scutari2019distributed}
\begin{barticle}
\bauthor{\bsnm{Scutari}, \binits{G.}},
\bauthor{\bsnm{Sun}, \binits{Y.}}:
\batitle{Distributed nonconvex constrained optimization over time-varying
  digraphs}.
\bjtitle{Mathematical Programming}
\bvolume{176}(\bissue{1}),
\bfpage{497}--\blpage{544}
(\byear{2019})
\end{barticle}
\endbibitem

\bibitem[\protect\citeauthoryear{Wu et~al.}{2022}]{wu2022distributed}
\begin{botherref}
\oauthor{\bsnm{Wu}, \binits{X.}},
\oauthor{\bsnm{Wang}, \binits{H.}},
\oauthor{\bsnm{Lu}, \binits{J.}}:
Distributed optimization with coupling constraints.
IEEE Transactions on Automatic Control
(2022)
\end{botherref}
\endbibitem

\bibitem[\protect\citeauthoryear{Zhu and Martinez}{2011}]{zhu2011distributed}
\begin{barticle}
\bauthor{\bsnm{Zhu}, \binits{M.}},
\bauthor{\bsnm{Martinez}, \binits{S.}}:
\batitle{On distributed convex optimization under inequality and equality
  constraints}.
\bjtitle{IEEE Transactions on Automatic Control}
\bvolume{57}(\bissue{1}),
\bfpage{151}--\blpage{164}
(\byear{2011})
\end{barticle}
\endbibitem

\bibitem[\protect\citeauthoryear{Gong and Zhang}{}]{gong4109852push}
\begin{botherref}
\oauthor{\bsnm{Gong}, \binits{K.}},
\oauthor{\bsnm{Zhang}, \binits{L.}}:
Push-pull based distributed primal-dual algorithm for coupled constrained
  convex optimization in multi-agent networks.
Available at SSRN 4109852
\end{botherref}
\endbibitem

\bibitem[\protect\citeauthoryear{Liang et~al.}{2019}]{liang2019distributed}
\begin{barticle}
\bauthor{\bsnm{Liang}, \binits{S.}},
\bauthor{\bsnm{Yin}, \binits{G.}}, \betal:
\batitle{Distributed smooth convex optimization with coupled constraints}.
\bjtitle{IEEE Transactions on Automatic Control}
\bvolume{65}(\bissue{1}),
\bfpage{347}--\blpage{353}
(\byear{2019})
\end{barticle}
\endbibitem

\bibitem[\protect\citeauthoryear{Alghunaim et~al.}{2018}]{alghunaim2018dual}
\begin{bchapter}
\bauthor{\bsnm{Alghunaim}, \binits{S.A.}},
\bauthor{\bsnm{Yuan}, \binits{K.}},
\bauthor{\bsnm{Sayed}, \binits{A.H.}}:
\bctitle{Dual coupled diffusion for distributed optimization with affine
  constraints}.
In: \bbtitle{2018 IEEE Conference on Decision and Control (CDC)},
pp. \bfpage{829}--\blpage{834}
(\byear{2018}).
\bcomment{IEEE}
\end{bchapter}
\endbibitem

\bibitem[\protect\citeauthoryear{Carli and Dotoli}{2019}]{carli2019distributed}
\begin{barticle}
\bauthor{\bsnm{Carli}, \binits{R.}},
\bauthor{\bsnm{Dotoli}, \binits{M.}}:
\batitle{Distributed alternating direction method of multipliers for linearly
  constrained optimization over a network}.
\bjtitle{IEEE Control Systems Letters}
\bvolume{4}(\bissue{1}),
\bfpage{247}--\blpage{252}
(\byear{2019})
\end{barticle}
\endbibitem

\bibitem[\protect\citeauthoryear{Aybat and
  Hamedani}{2019}]{aybat2019distributed}
\begin{barticle}
\bauthor{\bsnm{Aybat}, \binits{N.S.}},
\bauthor{\bsnm{Hamedani}, \binits{E.Y.}}:
\batitle{A distributed admm-like method for resource sharing over time-varying
  networks}.
\bjtitle{SIAM Journal on Optimization}
\bvolume{29}(\bissue{4}),
\bfpage{3036}--\blpage{3068}
(\byear{2019})
\end{barticle}
\endbibitem

\bibitem[\protect\citeauthoryear{Chang}{2016}]{chang2016proximal}
\begin{barticle}
\bauthor{\bsnm{Chang}, \binits{T.-H.}}:
\batitle{A proximal dual consensus admm method for multi-agent constrained
  optimization}.
\bjtitle{IEEE Transactions on Signal Processing}
\bvolume{64}(\bissue{14}),
\bfpage{3719}--\blpage{3734}
(\byear{2016})
\end{barticle}
\endbibitem

\bibitem[\protect\citeauthoryear{Kovalev et~al.}{2021a}]{kovalev2021adom}
\begin{botherref}
\oauthor{\bsnm{Kovalev}, \binits{D.}},
\oauthor{\bsnm{Shulgin}, \binits{E.}},
\oauthor{\bsnm{Richt{\'a}rik}, \binits{P.}},
\oauthor{\bsnm{Rogozin}, \binits{A.}},
\oauthor{\bsnm{Gasnikov}, \binits{A.}}:
Adom: Accelerated decentralized optimization method for time-varying networks.
arXiv preprint arXiv:2102.09234
(2021)
\end{botherref}
\endbibitem

\bibitem[\protect\citeauthoryear{Kovalev et~al.}{2021b}]{kovalev2021lower}
\begin{botherref}
\oauthor{\bsnm{Kovalev}, \binits{D.}},
\oauthor{\bsnm{Gasanov}, \binits{E.}},
\oauthor{\bsnm{Gasnikov}, \binits{A.}},
\oauthor{\bsnm{Richtarik}, \binits{P.}}:
Lower bounds and optimal algorithms for smooth and strongly convex
  decentralized optimization over time-varying networks.
Advances in Neural Information Processing Systems
\textbf{34}
(2021)
\end{botherref}
\endbibitem

\bibitem[\protect\citeauthoryear{Rogozin
  et~al.}{2022}]{rogozin2022decentralized}
\begin{botherref}
\oauthor{\bsnm{Rogozin}, \binits{A.}},
\oauthor{\bsnm{Yarmoshik}, \binits{D.}},
\oauthor{\bsnm{Kopylova}, \binits{K.}},
\oauthor{\bsnm{Gasnikov}, \binits{A.}}:
Decentralized strongly-convex optimization with affine constraints: Primal and
  dual approaches.
arXiv preprint arXiv:2207.04555
(2022)
\end{botherref}
\endbibitem

\bibitem[\protect\citeauthoryear{Zhou and Lange}{2013}]{zhou2013path}
\begin{barticle}
\bauthor{\bsnm{Zhou}, \binits{H.}},
\bauthor{\bsnm{Lange}, \binits{K.}}:
\batitle{A path algorithm for constrained estimation}.
\bjtitle{Journal of Computational and Graphical Statistics}
\bvolume{22}(\bissue{2}),
\bfpage{261}--\blpage{283}
(\byear{2013})
\end{barticle}
\endbibitem

\bibitem[\protect\citeauthoryear{Nesterov}{2004}]{nesterov2004introduction}
\begin{bbook}
\bauthor{\bsnm{Nesterov}, \binits{Y.}}:
\bbtitle{Introductory Lectures on Convex Optimization: a Basic Course}.
\bpublisher{Kluwer Academic Publishers, Massachusetts}, \blocation{???}
(\byear{2004})
\end{bbook}
\endbibitem

\bibitem[\protect\citeauthoryear{Salim et~al.}{2022}]{salim2022optimal}
\begin{bchapter}
\bauthor{\bsnm{Salim}, \binits{A.}},
\bauthor{\bsnm{Condat}, \binits{L.}},
\bauthor{\bsnm{Kovalev}, \binits{D.}},
\bauthor{\bsnm{Richt{\'a}rik}, \binits{P.}}:
\bctitle{An optimal algorithm for strongly convex minimization under affine
  constraints}.
In: \bbtitle{International Conference on Artificial Intelligence and
  Statistics},
pp. \bfpage{4482}--\blpage{4498}
(\byear{2022}).
\bcomment{PMLR}
\end{bchapter}
\endbibitem

\bibitem[\protect\citeauthoryear{Scaman et~al.}{2017}]{scaman2017optimal}
\begin{bchapter}
\bauthor{\bsnm{Scaman}, \binits{K.}},
\bauthor{\bsnm{Bach}, \binits{F.}},
\bauthor{\bsnm{Bubeck}, \binits{S.}},
\bauthor{\bsnm{Lee}, \binits{Y.T.}},
\bauthor{\bsnm{Massouli{\'e}}, \binits{L.}}:
\bctitle{Optimal algorithms for smooth and strongly convex distributed
  optimization in networks}.
In: \bbtitle{Proceedings of the 34th International Conference on Machine
  Learning-Volume 70},
pp. \bfpage{3027}--\blpage{3036}
(\byear{2017}).
\bcomment{JMLR. org}
\end{bchapter}
\endbibitem

\bibitem[\protect\citeauthoryear{Yarmoshik
  et~al.}{2022}]{yarmoshik2022decentralized}
\begin{botherref}
\oauthor{\bsnm{Yarmoshik}, \binits{D.}},
\oauthor{\bsnm{Rogozin}, \binits{A.}},
\oauthor{\bsnm{Khamisov}, \binits{O.}},
\oauthor{\bsnm{Dvurechensky}, \binits{P.}},
\oauthor{\bsnm{Gasnikov}, \binits{A.}}, et al.:
Decentralized convex optimization under affine constraints for power systems
  control.
arXiv preprint arXiv:2203.16686
(2022)
\end{botherref}
\endbibitem

\end{thebibliography}

\end{document}